\theoremstyle{plain} %text of this environment is typesetted in italics
\newtheorem{theorem}{\indent\sc Theorem}[section]
\newtheorem{lemma}[theorem]{\indent\sc Lemma}
\newtheorem{corollary}[theorem]{\indent\sc Corollary}
\newtheorem{proposition}[theorem]{\indent\sc Proposition}
\theoremstyle{definition} %text of this environment is typesetted in roman letters
\newtheorem{remark}[theorem]{\indent\sc Remark}
\def\C{{\mathbf{C}}}%   \C == \mathbf{C}
\def\R{{\mathbf{R}}}%   \R == \mathbf{R}
\def\H{{\mathbf{H}}}%   \H == \mathbf{H}
\def\Z{{\mathbf{Z}}}%   \Z == \mathbf{Z}
\def\D{{\mathbf{D}}}%  \D == \mathbf{D}
\def\Pi{{\mathbf{P}}}%  \Pi == \mathbf{P}
\def\Si{{\mathbf{S}}}%   \Si == \mathbf{S}
\def\RC{{\overline{\mathbf{C}}}} %\RC == \overline{\mathbf{C}}
\begin{document}

\title[Complete minimal surfaces in Euclidean 4-space]{Remarks on the Gauss images of complete minimal surfaces in Euclidean four-space}%title of paper and the running head option

\author[R. Aiyama]{Reiko Aiyama} % first author's

\author[K. Akutagawa]{Kazuo Akutagawa} % second author

\author[S. Imagawa]{Satoru Imagawa} %third author

\author[Y. Kawakami]{Yu Kawakami} %fourth author

\dedicatory{Dedicated to Professor Hiroo Naitoh on his 65th birthday}

%%%%%%%%%%%%%%% footnote %%%%%%%%%%%%%%%%
%\subjclass[2000]{ %2000 MSC numbers
%Primary 00; Secondary 00.
%}
%In case \subjclass[2000] command is not effective
%(or the version of amsart.cls is old), write as follows instead: 
\renewcommand{\thefootnote}{\fnsymbol{footnote}}
\footnote[0]{2010\textit{ Mathematics Subject Classification}.
Primary 53A10; Secondary 30D35, 53C42.}
\keywords{ %key words and phrases
Gauss map, exceptional values, minimal surface, minimal Lagrangian surface. 
}
\thanks{
The second author is supported in part by the Grant-in-Aid for Scientific Research (B), 
No. 24340008, Japan Society for the Promotion of Science.}
\thanks{ %acknowledgment of support etc. if any
The fourth author is supported by the Grant-in-Aid for Scientific Research (C), 
No. 15K04840, Japan Society for the Promotion of Science.}
%%%%%%%%%%%% Authors addresses %%%%%%%%%%%%%
\address{% 1st 
Institute of Mathematics, \endgraf
University of Tsukuba, \endgraf
Tsukuba, 305-8571, Japan
}
\email{aiyama@math.tsukuba.ac.jp}

\address{% 2nd 
Department of Mathematics, \endgraf
Tokyo Institute of Technology, \endgraf
Tokyo, 152-8551, Japan
}
\email{akutagawa@math.titech.ac.jp}

\address{% third Author
Graduate School of Natural Science and Technology, \endgraf
Kanazawa university, \endgraf
Kanazawa, 920-1192, Japan
}
\email{satoru4105@yahoo.co.jp}

\address{% 4th Author
Faculty of Mathematics and Physics, \endgraf
Institute of Science and Engineering, \endgraf 
Kanazawa University, \endgraf
Kanazawa, 920-1192, Japan
}
\email{y-kwkami@se.kanazawa-u.ac.jp}
%%%%%%%%%%%%%%%%%%%%%%%%%%%%%%%%%%%%%%%%%

\maketitle

\begin{abstract}
We perform a systematic study of the image of the Gauss map for complete minimal surfaces 
in Euclidean four-space. In particular, we give a geometric interpretation of the maximal number 
of exceptional values of the Gauss map of a complete orientable minimal surface in Euclidean four-space. 
We also provide optimal results for the maximal number of exceptional values of the Gauss map of a complete minimal 
Lagrangian surface in the complex two-space and the generalized Gauss map of a complete nonorientable minimal surface 
in Euclidean four-space. 
\end{abstract}

\section{Introduction}\label{section1}

The study of geometric aspects of value distribution theory of complex analytic mappings has achieved many important 
advances. One of the most brilliant results in the study is to give a geometric interpretation of the precise maximum 
for the number of exceptional values of a nonconstant holomorphic map from the complex plane $\C$ to a closed Riemann surface 
$\overline{\Sigma}_{\gamma}$ of genus $\gamma$. Here we call a value that a function or a map never attains an 
{\it exceptional value} of the function or map. In fact, Ahlfors \cite{Ah1935} and Chern \cite{Ch1960} proved that 
the least upper bound for the number of exceptional values of a nonconstant holomorphic map from $\C$ to 
$\overline{\Sigma}_{\gamma}$ coincides with the Euler characteristic of $\overline{\Sigma}_{\gamma}$ by using 
Nevanlinna theory (see also \cite{Ko2003, NO1990, NW2014, Ru2001}). In particular, for a nonconstant meromorphic function on 
$\C$, the geometric interpretation of the maximal number $2$ of exceptional values is the Euler characteristic of the 
Riemann sphere $\RC :=\C\cup \{\infty \}$. We remark that if the closed Riemann surface is of $\gamma \ge 2$, then 
such a map does not exist because the Euler characteristic is negative. 

There exist several classes of immersed surfaces in $3$-dimensional space forms whose Gauss maps have 
value-distribution-theoretical property. For instance, Fujimoto \cite[Theorem I]{Fu1988} proved that the Gauss map of a nonflat 
complete minimal surface in Euclidean $3$-space ${\R}^{3}$ can omit at most $4$ values. The fourth author and Nakajo \cite{KN2012} 
obtained that the maximal number of exceptional values of the Lagrangian Gauss map of a weakly complete improper affine 
front in the affine $3$-space ${\R}^{3}$ is $3$, unless it is an elliptic paraboloid. We note that an improper affine front 
is also called an improper affine map in \cite{MA2005}. We here call it an improper affine front because Nakajo \cite{Na2009} 
and Umehara and Yamada \cite{UY2011} showed that an improper affine map is a front in ${\mathbf{R}}^{3}$. 
Moreover, we \cite{Ka2014} gave 
similar result for flat fronts in ${\H}^{3}$. In \cite{Ka2013}, we obtained a geometric interpretation for the maximal number of 
exceptional values of their Gauss maps. To be precise, we gave a curvature bound for the conformal metric 
$ds^{2}=(1+|g|^{2})^{m}|\omega|^{2}$ on an open Riemann surface $\Sigma$, where $m$ is a positive integer, $\omega$ is a 
holomorphic $1$-form and $g$ is a meromorphic function on $\Sigma$ (\cite[Theorem 2.1]{Ka2013}) and, as a corollary of 
the theorem, proved that the precise maximal number of exceptional values of the nonconstant meromorphic function $g$ on 
$\Sigma$ with the complete conformal metric $ds^{2}$ is $m+2$ (\cite[Corollary 2.2 and Proposition 2.4]{Ka2013}). 
We note that the geometric meaning of the $2$ in $m+2$ is the Euler characteristic of $\RC$ (\cite[Remark 2.3]{Ka2013}). 
Since the induced metric from ${\R}^{3}$ of a minimal surface is $ds^{2}=(1+|g|^{2})^{2}|\omega|^{2}$ (i.e., $m=2$), 
the maximal number of exceptional values of the Gauss map $g$ of a nonflat complete minimal surface in ${\R}^{3}$ is 
$4\,(=2+2)$. For the Lagrangian Gauss map $\nu$ of a weakly complete improper affine front, because $\nu$ is 
meromorphic, $dG$ is holomorphic and the complete metric $d{\tau}^{2}=(1+|\nu|^{2})|dG|^{2}$ (i.e., $m=1$), the maximal 
number of exceptional values of the Lagrangian Gauss map of a weakly complete improper affine front is $3\,(=1+2)$, 
unless it is an elliptic paraboloid.  

On the other hand, Fujimoto \cite[Theorem I\hspace{-.1em}I]{Fu1988} also obtained an optimal estimate for 
the number of exceptional values of the Gauss map of a nonflat complete (orientable) minimal surface in ${\R}^{4}$, and 
Hoffman and Osserman \cite{HO1980} gave a similar result for a nonflat algebraic minimal surface in ${\R}^{4}$ 
(by algebraic minimal surface, we mean a complete minimal surface with finite total curvature). Recently, 
we \cite{Ka2009} gave an effective estimate for 
the number of exceptional values of the Gauss map for a special class of complete minimal surfaces in ${\R}^{4}$ that includes 
algebraic minimal surfaces (this class is called the pseudo-algebraic minimal surfaces. For the corresponding result 
in ${\R}^{3}$, see \cite{KKM2008}). This also provided a geometric interpretation of the Fujimoto and Hoffman-Osserman 
results for this class, because the estimate is described in terms of geometric invariants. 
However, from \cite{Ka2009}, it was still not possible to understand a geometric interpretation for general class. 
Moreover there has been no unified explanation for the study of the image of the Gauss map of complete minimal surfaces 
in ${\R}^{4}$ including nonorientable case. 

The purpose of this paper is to perform a systematic study of the image of the Gauss map for complete minimal 
surfaces in ${\R}^{4}$. The paper is organized as follows: In Section \ref{section2}, we give an optimal estimate 
for the size of the image of the holomorphic map $G=(g_{1}, \ldots, g_{n})\colon \Sigma \to (\RC)^{n}:=
\underbrace{\RC\times \cdots \times \RC}_{n}$ on an open Riemann surface $\Sigma$ with the complete conformal metric 
$
ds^{2}= \prod_{i=1}^{n}(1+|g_{i}|^{2})^{m_{i}}|\omega|^{2}, 
$
where $\omega$ is a holomorphic $1$-form on $\Sigma$ and each $m_{i}$ $(i=1, \cdots, n)$ is a positive integer 
(Theorem \ref{thm-main} and Proposition \ref{prop-main}). 
The result is a generalization of \cite[Corollary 2.2]{Ka2013}. 
In Section \ref{section3.1}, applying the result, we give a geometric interpretation 
of the Fujimoto result \cite[Theorem I\hspace{-.1em}I]{Fu1988} for the maximal number of exceptional values of the Gauss map 
$G=(g_{1}, g_{2})$ of a complete orientable minimal surface in ${\R}^{4}$, that is, the maximal number deeply depends 
on the induced metric from ${\R}^{4}$ and the Euler characteristic of ${\RC}$. In Section \ref{section3.2}, 
after reviewing basic facts, we give the maximal number of exceptional values of the nonconstant part of the Gauss map of 
a complete minimal Lagrangian surface in ${\C}^{2}$ (Corollary \ref{thm-appl-3}). 
In Section \ref{section3.3}, we study the value distribution of the generalized Gauss map of a complete nonorientable minimal 
surface in ${\R}^{4}$. Recently the study of complete nonorientable minimal surfaces has attracted a lot of attention (for example, 
see \cite{AL2015}, \cite{AFL2016}, \cite{LMM2006}, \cite{Ro2006}, \cite{Ro1992} and \cite{Ro1997}, for a good survey see 
\cite{Ma2005}). In \cite{FL2010}, the geometry and topology of complete maximal surfaces with lightlike singularities in 
the Lorentz-Minkowski $3$-space are studied. In this paper, we give an effective estimate for the maximal number of 
exceptional values of the generalized 
Gauss map of a complete nonorientable minimal surface in ${\R}^{4}$ (Corollary \ref{thm-appl-nonori-1}). 
Moreover, by using the argument of L\'opez-Mart\'in 
\cite{LM2000}, we construct examples showing that the estimate is shrap (Proposition \ref{thm-appl-nonori-2} and 
Remark \ref{rmk-appl-nonori-2}). 

%section: Main theorem
\section{Main theorem}\label{section2}
We first state the main theorem of this paper. 

% main theorem 
\begin{theorem}\label{thm-main}
Let $\Sigma$ be an open Riemann surface with the conformal metric 
\begin{equation}\label{equ-conformal}
ds^{2}=\displaystyle \prod_{i=1}^{n}(1+|g_{i}|^{2})^{m_{i}}|\omega|^{2}, 
\end{equation}
where $G=(g_{1}, \ldots , g_{n})\colon \Sigma \to (\RC)^{n}:=
\underbrace{\RC\times \cdots \times \RC}_{n}$ 
is a holomorphic map, $\omega$ is a holomorphic 
$1$-form on $\Sigma$ and each $m_{i}$ $(i=1, \cdots, n)$ is a positive integer. Assume that $g_{i_{1}}, \ldots, g_{i_{k}}$
$(1\leq i_{1}< \cdots <i_{k} \leq n)$ are nonconstant and the others are constant. If the metric $ds^{2}$ is complete and 
each $g_{i_{l}}$ $(l=1, \cdots , k)$ omits $q_{i_{l}}> 2$ distinct values, then we have 
\begin{equation}\label{equ-exc}
\displaystyle \sum_{l=1}^{k} \dfrac{m_{i_{l}}}{q_{i_{l}}-2}\geq 1. 
\end{equation}
\end{theorem}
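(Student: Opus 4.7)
The plan is to follow Fujimoto's curvature-and-completeness method, adapting the single-factor proof \cite[Theorem 2.1]{Ka2013} to the product form of (\ref{equ-conformal}). Assume, for contradiction, that $\sum_{l=1}^{k} m_{i_l}/(q_{i_l}-2)<1$. Relabel so that $g_l:=g_{i_l}$, $m_l:=m_{i_l}$, $q_l:=q_{i_l}$, and let $\alpha_l^1,\ldots,\alpha_l^{q_l}\in\RC$ be the exceptional values of $g_l$. The factors $(1+|g_i|^2)^{m_i}$ coming from constant coordinates are globally bounded on $\Sigma$, so they can be absorbed into $|\omega|^2$ and I may assume $n=k$.

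First I pick the auxiliary exponents. Under the contradictory hypothesis I can choose rationals $p_l>m_l/(q_l-2)$ with $\sum_l p_l <1$, and then $\delta_l\in(0,1)$ small enough that the strict inequality $p_l(1-\delta_l)(q_l-2)>m_l$ persists. On the open subset $\Sigma'\subset\Sigma$ obtained by removing the zeros of $\omega$ and the ramification points of each $g_l$, I form the conformal pseudo-metric
\[
d\hat\sigma^{2} \;=\; |\omega|^{\,2(1-\sum_l p_l)}\;\prod_{l=1}^{k}\left(\frac{|g_l'|\,|dz|}{(1+|g_l|^2)\prod_{j=1}^{q_l}\bigl(|g_l-\alpha_l^j|/\sqrt{1+|\alpha_l^j|^2}\bigr)^{1-\delta_l}}\right)^{2p_l},
\]
namely, $k$ Fujimoto building blocks (one per nonconstant $g_l$) raised to the powers $p_l$, times a flat residual factor. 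Since the Gaussian curvature of a conformal product is the sum of the factor curvatures, and each block has curvature bounded above by a negative constant on $\Sigma'$ by the computation in \cite[Theorem 2.1]{Ka2013}, while the residual factor is flat, $d\hat\sigma^2$ has Gaussian curvature $\le -c$ for some $c>0$ on all of $\Sigma'$.

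The relation $p_l(1-\delta_l)(q_l-2)>m_l$ is calibrated so that, near the zeros of $\omega$ and the ramification points of the $g_l$, $d\hat\sigma^2$ is dominated by a power of $ds^2$ up to bounded factors. Therefore the $ds^2$-completeness of $\Sigma$ forces every divergent curve in $\Sigma$ to have infinite $d\hat\sigma^2$-length as well. Applying the Ahlfors--Schwarz-type lemma of Fujimoto (any conformal pseudo-metric of curvature $\le -c$ on $\{|w|<R\}$ satisfies a pointwise bound of the form $d\hat\sigma^{2}(0)\le C/R^{2}$) to a developing map from a large disk into $\Sigma'$, and letting $R\to\infty$, gives $d\hat\sigma^{2}\equiv 0$ at any fixed interior point, contradicting the non-constancy of each $g_l$. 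This contradiction establishes (\ref{equ-exc}).

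The main obstacle is the simultaneous exponent bookkeeping: $p_l$ and $\delta_l$ have to be chosen so that each block has strictly negative curvature, so that the product curvature stays negative after inclusion of the flat tail, and so that the pointwise comparison with $ds^2$ actually extends across the (codimension-one) singular locus of $d\hat\sigma^2$. The last point is what ties the strict inequality $p_l>m_l/(q_l-2)$ to the exponent $m_l$ appearing in (\ref{equ-conformal}), and is the reason that (\ref{equ-exc}) comes out sharp.
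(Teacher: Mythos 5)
Your overall scaffolding---negate the inequality, choose exponents $p_l$ with $\sum_l p_l<1$ calibrated against $m_l$ and $q_l-2$, build an auxiliary conformal metric out of Fujimoto blocks on the complement $\Sigma'$ of the ramification locus, and contradict completeness---has the right shape, and your exponent bookkeeping essentially matches the paper's choice ${\lambda}_{i_{l}}=m_{i_{l}}/(q_{i_{l}}-2-q_{i_{l}}\eta)$ with $\Lambda=\sum_{l}{\lambda}_{i_{l}}<1$. But the engine you propose to run on it, uniformly negative curvature plus an Ahlfors--Schwarz lemma, does not work. First, the Gaussian curvature of a conformal product is \emph{not} the sum of the factor curvatures: for $\lambda=\mu\prod_{l}\lambda_{l}^{p_{l}}$ with $\mu$ flat, one has $K_{\lambda^{2}|dz|^{2}}=-\Delta\log\lambda/\lambda^{2}=\sum_{l}p_{l}K_{l}\lambda_{l}^{2}/\lambda^{2}$, so even if each block satisfies $K_{l}\le -c_{l}<0$, the product curvature is only bounded by $-\sum_{l}p_{l}c_{l}\lambda_{l}^{2}/\bigl(\mu^{2}\prod_{j}\lambda_{j}^{2p_{j}}\bigr)$, which tends to $0$ wherever the blocks or the flat residual factor $|\omega|^{1-\sum p_{l}}$ become large; no uniform bound $K\le -c$ follows (already a single block raised to a power $p_{l}<1$ loses the uniform bound). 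Second, even granting such a bound, the Ahlfors--Schwarz argument needs a conformal map of an arbitrarily large disk into $\Sigma'$ in order to let $R\to\infty$, and nothing supplies one; a complete metric of curvature $\le -c$ on $\Sigma'$ is not by itself a contradiction. Relatedly, your completeness-transfer step is inverted: knowing that $d\hat\sigma^{2}$ is \emph{dominated by} a power of $ds^{2}$ cannot show that divergent curves have infinite $d\hat\sigma^{2}$-length, and a pointwise comparison of metrics through a power $\alpha\ne 1$ does not transfer length estimates in either direction.

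The paper's proof avoids all of this by making the auxiliary metric $d\sigma^{2}$ \emph{flat} on $\Sigma'$ (the exponents are chosen so that its conformal factor is the modulus of a multivalued holomorphic function), developing it isometrically onto a Euclidean disk $\Delta_{R}$ via Lemma \ref{Lem-main2}, and then using the function-theoretic estimate of Lemma \ref{Lem-main1} to show that $R<+\infty$. The condition ${\lambda}_{i_{l}}/(1-\Lambda)>1$ is exactly what forces the resulting divergent curve to be divergent in $\Sigma$ itself and not merely in $\Sigma'$, and the final computation bounds $\int_{\Gamma_{a_{0}}}ds$ by $C_{2}R^{1-\Lambda}/(1-\Lambda)<+\infty$ because $\Lambda<1$, contradicting completeness of $ds^{2}$. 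To salvage your write-up, keep your exponent choices but replace the curvature/Ahlfors--Schwarz mechanism by this flat-development mechanism; as written, the proposal has a genuine gap at its central step.
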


% remark 
We note that Theorem \ref{thm-main} also holds for the case where at least one of $m_{1}, \ldots, m_{n}$ is positive and 
the others are zeros. For instance, we assume that $g:= g_{i_{1}}$ is nonconstant and the others are constant. 
If $m:=m_{i_{1}}$ is a positive integer and the others are zeros, then the inequality (\ref{equ-exc}) coincides with 
$$
\dfrac{m}{q-2}\geq 1 \, \Longleftrightarrow  \, q \leq m+2,  
$$
where $q:=q_{i_{1}}$. The result corresponds with \cite[Corollary 2.2]{Ka2013}. 
Moreover if all $m_{i}$ are zeros, then the metric $ds^{2}=|\omega|^{2}$ is flat and complete on $\Sigma$. 
We thus may assume that each $g_{i_{l}}$ is a nonconstant meromorphic function on $\C$ 
because there exists a holomorphic universal covering map $\pi \colon \C \to \Sigma$ and 
each $g_{i_{l}}$ is replaced by $g_{i_{l}}\circ \pi$. By the little Picard theorem, 
we have that each $g_{i_{l}}$ can omit at most $2$ distinct values. 
We remark that the geometric interpretation of the precise maximum $2$ for the number of exceptional values 
of a nonconstant meromorphic function on $\C$ is the Euler characteristic of the Riemann sphere $\RC$ 
(\cite{Ah1935}, \cite{Ch1960}). 

% optimal example 
The inequality (\ref{equ-exc}) is optimal because there exist the following examples. 

\begin{proposition}\label{prop-main}
Let $\Sigma$ be the complex plane punctured at $p-1$ distinct points ${\alpha}_{1}, \ldots, {\alpha}_{p-1}$ or 
the universal cover of that punctured plane. We set 
$$
\omega =\dfrac{dz}{\prod_{j=1}^{p-1}(z-{\alpha}_{j})} 
$$
and the map $G=(g_{1}, \ldots , g_{n})$ is given by 
$$
g_{i_{1}}= \cdots = g_{i_{k}}=z \quad (1\leq i_{1}< \cdots < i_{k}\leq n )
$$ 
and the others are constant. Then all $g_{i_{l}}$ $(l=1, \cdots, k)$ omit $p$ distinct values 
${\alpha}_{1}, \ldots, {\alpha}_{p-1}, \infty$ and the metric (\ref{equ-conformal}) is complete if and only if 
$$
p\leq 2+\displaystyle \sum_{l=1}^{k}m_{i_{l}}. 
$$
In particular, there exist examples which satisfy the equality of (\ref{equ-exc}). 
\end{proposition}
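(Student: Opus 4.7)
The plan is to reduce the metric to an explicit form and verify completeness end-by-end, after which the sharp threshold and the omitted values can be read off. First I would absorb the positive factors $(1+|c_i|^2)^{m_i}$ coming from the constant entries of $G$ into a single positive constant $C$, so that with $M:=\sum_{l=1}^{k} m_{i_l}$ the metric (\ref{equ-conformal}) reduces to
$$
ds^2 \;=\; C\,(1+|z|^2)^M \, \frac{|dz|^2}{\prod_{j=1}^{p-1}|z-\alpha_j|^2}.
$$
The ends of $\Sigma$ are the finite punctures $\alpha_1,\ldots,\alpha_{p-1}$ and the puncture at $\infty$; the universal-cover case will be reduced to this one at the end.

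Next I would dispose of the finite punctures. Near $\alpha_j$, the factor $(1+|z|^2)^M \prod_{l\ne j}|z-\alpha_l|^{-2}$ is bounded above and below by positive constants, so $ds$ is comparable to $|dz|/|z-\alpha_j|$ and the radial length integral $\int_0 dr/r$ diverges. Hence completeness at each $\alpha_j$ holds unconditionally, independently of $p$ and $M$.

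The key step is the analysis at $\infty$. Writing $w=1/z$ and tracking the leading orders as $|w|\to 0$ gives $|dz|^2 = |w|^{-4}|dw|^2$, $(1+|z|^2)^M \sim |w|^{-2M}$ and $\prod_{j=1}^{p-1}|z-\alpha_j|^2 \sim |w|^{-2(p-1)}$, hence
$$
ds^2 \;\sim\; |w|^{2(p-M-3)}\,|dw|^2.
$$
A radial path into $w=0$ has infinite length precisely when $p-M-3\le -1$, so the metric is complete at $\infty$ if and only if $p\le 2+M$, which is the claimed completeness condition.

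It remains to read off the exceptional values and identify the equality case. Each nonconstant component $g_{i_l}(z)=z$ takes its values in $\Sigma\subset\C$, so it omits exactly the $p$ values $\alpha_1,\ldots,\alpha_{p-1},\infty$; substituting $q_{i_l}=p$ into (\ref{equ-exc}) reproduces $p\le 2+M$, with equality exactly at the borderline case $p=2+M$. For the universal-cover version, I would simply note that the covering map is a local isometry, so completeness of the lifted metric is equivalent to completeness on the base, while the omitted values of $g_{i_l}\circ\pi$ are unchanged. No step presents a real obstacle; the only care needed is keeping track of the sharp exponent $p-M-3$ at infinity, which is what dictates the threshold.
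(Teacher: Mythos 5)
Your proof is correct and follows essentially the same route as the paper: both verify completeness by estimating the length of a divergent path, which must tend to one of the punctures $\alpha_j$ or to $\infty$, and both read off the equality case of (\ref{equ-exc}) from $q_{i_l}=p$. Your version is in fact somewhat more thorough, since you separate the (unconditionally complete) finite punctures from the end at $\infty$, check the converse direction via the finite-length radial path when $p>2+\sum_{l=1}^{k}m_{i_l}$, and treat the universal-cover case explicitly, whereas the paper only records that the length integral diverges when $p\leq 2+\sum_{l=1}^{k}m_{i_l}$.
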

\begin{proof}
A divergent path $\Gamma$ in $\Sigma$ must tend to one of the points ${\alpha}_{1}, \ldots, {\alpha}_{p-1}$ or $\infty$. 
Thus we have 
$$
\int_{\Gamma} ds= \int_{\Gamma}\, \prod_{i=1}^{n}(1+|g_{i}|^{2})^{m_{i}/2}|\omega| 
= C \int_{\Gamma} \dfrac{\prod_{l=1}^{k}(1+|z|^{2})^{m_{i_{l}}/2}}{\prod_{j=1}^{p-1}|z-{\alpha}_{j}|}|dz|= \infty 
$$
when $p\leq 2+\sum_{l=1}^{k}m_{i_{l}}$. Here $C$ is some constant. Then the equality of (\ref{equ-exc}) holds if and only if 
$p=2+\sum_{l=1}^{k}m_{i_{l}}$. 
\end{proof}

% notation%
Before proceeding to the proof of Theorem \ref{thm-main}, we recall the notion of chordal distance between 
two distinct values in $\RC$ and two function-theoretic lemmas. For two distinct values 
$\alpha$, $\beta\in \RC$, we set 
$$
|\alpha, \beta|:= \dfrac{|\alpha -\beta|}{\sqrt{1+|\alpha|^{2}}\sqrt{1+|\beta|^{2}}}
$$
if $\alpha \not= \infty$ and $\beta \not= \infty$, and $|\alpha, \infty|=|\infty, \alpha| := 1/\sqrt{1+|\alpha|^{2}}$. 
We note that, if we take $v_{1}$, $v_{2}\in {\Si}^{2}$ with $\alpha =\varpi (v_{1})$ and $\beta = \varpi (v_{2})$, we have that 
$|\alpha, \beta|$ is a half of the chordal distance between $v_{1}$ and $v_{2}$, where $\varpi$ denotes the stereographic projection of 
the $2$-sphere ${\Si}^{2}$ onto $\RC$. 

% 2-Lemmas %
\begin{lemma}{\cite[(8.12) in page 136]{Fu1997}}\label{Lem-main1}
Let $g$ be a nonconstant meromorphic function on ${\Delta}_{R}=\{z\in \C ; |z|< R \}$ $(0<R\leq +\infty)$ 
which omits $q$ values 
${\alpha}_{1}, \ldots, {\alpha}_{q}$. If $q>2$, then for each positive $\eta$ with $\eta <(q-2)/q$, 
there exists a positive constant $C'$ 
depending on $q$ and $L:=\min_{i< j}|{\alpha}_{i}, {\alpha}_{j}|$ such that 
\begin{equation}\label{equ-lemma1}
\dfrac{|g'_{z}|}{(1+|g|^{2})\prod_{j=1}^{q}|g, {\alpha}_{j}|^{1-\eta}}\leq C'\dfrac{R}{R^{2}-|z|^{2}}. 
\end{equation}  
\end{lemma}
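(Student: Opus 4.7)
The plan is to derive the estimate by applying the Ahlfors--Schwarz lemma to a conformal pseudo-metric on $\Delta_R$ built out of $\lambda := |g'_z|/((1+|g|^{2})\prod_{j=1}^{q}|g,\alpha_{j}|^{1-\eta})$. The target inequality $\lambda\le C' R/(R^{2}-|z|^{2})$ is precisely the form of the Poincar\'e density on $\Delta_R$, so the natural strategy is to produce a pseudo-metric $d\tilde\sigma$ on $\Delta_R$ whose Gaussian curvature is bounded above by a strictly negative constant depending only on $q$ and $L$, and which is comparable to $\lambda|dz|$.

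First, I would carry out the basic curvature calculation. Writing $\Delta=4\partial_{z}\partial_{\bar z}$ and using that $\log|g'|$ and $\log|g-\alpha_{j}|$ are harmonic on their domains of definition (the latter because $g$ never assumes $\alpha_{j}$), together with the standard identities $\Delta\log(1+|g|^{2})=4|g'|^{2}/(1+|g|^{2})^{2}$ and $\Delta\log|g,\alpha_{j}|=-2|g'|^{2}/(1+|g|^{2})^{2}$, one obtains
$$
\Delta\log\lambda \;=\; 2\bigl(q(1-\eta)-2\bigr)\,\frac{|g'|^{2}}{(1+|g|^{2})^{2}} \;=\; 2\bigl(q(1-\eta)-2\bigr)\,\lambda^{2}\,\prod_{j=1}^{q}|g,\alpha_{j}|^{2(1-\eta)}.
$$
The hypothesis $\eta<(q-2)/q$ is exactly what makes the coefficient $q(1-\eta)-2$ strictly positive, so $\log\lambda$ is subharmonic. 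However, the curvature of the naive pseudo-metric $\lambda|dz|$ degenerates to $0$ as $g$ approaches any $\alpha_{j}$, so a direct application of Ahlfors--Schwarz is not possible.

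The main technical step, and the key obstacle, is to modify $\lambda$ by a bounded positive factor $h=h(g)$ so that $d\tilde\sigma=\lambda\, h\,|dz|$ has curvature bounded above by a strictly negative constant. One chooses $h$ as a product of explicit functions of $|g,\alpha_{j}|$ (for instance involving slowly varying factors such as $(c-\log|g,\alpha_{j}|)^{-s}$ with $c$, $s$ chosen depending on $q$, $\eta$, $L$) so that the extra negative contribution to $\Delta\log(\lambda h)$ coming from differentiating $\log h$ exactly compensates for the degeneracy $\prod|g,\alpha_{j}|^{2(1-\eta)}\to 0$. Since $L=\min_{i<j}|\alpha_{i},\alpha_{j}|>0$, the chordal distances $|g,\alpha_{j}|$ to the remaining exceptional values are bounded below when $g$ is close to a given $\alpha_{j}$, so the compensation only needs to be carried out one factor at a time, and one can keep $h$ bounded above and below by constants depending only on $q$ and $L$.

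Once this auxiliary pseudo-metric is constructed, I would apply the classical Ahlfors--Schwarz lemma: any conformal pseudo-metric on $\Delta_{R}$ whose Gaussian curvature is bounded above by $-K<0$ is dominated by $K^{-1/2}$ times the Poincar\'e pseudo-metric of curvature $-1$, namely $2R/(R^{2}-|z|^{2})\,|dz|$. Since $h$ is bounded below by a positive constant $c=c(q,L)$, the resulting bound on $d\tilde\sigma$ translates directly into $\lambda\le C'R/(R^{2}-|z|^{2})$ with $C'=C'(q,L)$, which is exactly (\ref{equ-lemma1}). The hardest part of the argument is the choice of the compensating factor $h$ and the verification that the curvature of $\lambda h\,|dz|$ is uniformly negative; everything else is a routine application of the curvature/Ahlfors--Schwarz machinery.
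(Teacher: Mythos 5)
First, a point of context: the paper does not prove this lemma at all --- it is quoted directly from Fujimoto \cite[(8.12)]{Fu1997} --- so your attempt can only be compared with the standard proof there. Your overall strategy is indeed Fujimoto's: the curvature computation $\Delta\log\lambda = 2\bigl(q(1-\eta)-2\bigr)|g'|^{2}/(1+|g|^{2})^{2}$ is correct, the diagnosis that the naive metric $\lambda\,|dz|$ has curvature degenerating to $0$ as $g$ approaches an $\alpha_{j}$ is the right one, and the plan of repairing this by a factor $h(g)$, using that at most one chordal distance $|g,\alpha_{j}|$ can be small at a time (this is where $L$ enters), and then invoking the Ahlfors--Schwarz lemma, is exactly the intended route.

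The gap is in the step you yourself identify as the hardest, and it is not just an omitted computation: the compensating factor you name, a product of terms of the form $(c-\log|g,\alpha_{j}|)^{-s}$ with $s>0$ (equivalently Fujimoto's $\bigl(\log(a_{0}/|g,\alpha_{j}|^{2})\bigr)^{-1}$), tends to $0$ as $g\to\alpha_{j}$. Such an $h$ is bounded above but \emph{not} bounded below, so your closing assertion that ``$h$ is bounded below by a positive constant $c(q,L)$'' is false for the very choice of $h$ you propose, and the Ahlfors--Schwarz bound on $\lambda h$ then says nothing about $\lambda$ precisely where it matters, namely where $g$ is close to some $\alpha_{j}$. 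The standard way to close this is to run the entire construction with a strictly smaller exponent $\eta'$, $0<\eta'<\eta$, which yields $\dfrac{|g'_{z}|}{(1+|g|^{2})\prod_{j}|g,\alpha_{j}|^{1-\eta'}\log(a_{0}/|g,\alpha_{j}|^{2})}\leq C\,\dfrac{R}{R^{2}-|z|^{2}}$, and then to absorb the discrepancy between the two normalizations using the elementary fact that $x^{\eta-\eta'}\log(a_{0}/x^{2})$ is bounded on $(0,1]$; this produces (\ref{equ-lemma1}) with a constant depending on $q$, $L$ and $\eta$. Without this absorption step --- or an explicit construction of an $h$ genuinely pinched between positive constants, which your candidate is not --- the argument does not reach the stated inequality.
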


\begin{lemma}{\cite[Lemma 1.6.7]{Fu1993}}\label{Lem-main2} 
Let $d{\sigma}^{2}$ be a conformal flat-metric on an open Riemann surface $\Sigma$. 
Then, for each point $p\in \Sigma$, there exists a local diffeomorphism $\Phi$ of a 
disk ${\Delta}_{R}=\{z\in \C ; |z|< R \}$ $(0<R\leq +\infty)$ onto an open 
neighborhood of $p$ with $\Phi (0)=p$ such that $\Phi$ is an isometry, that is, 
the pull-back ${\Phi}^{\ast}(d{\sigma}^{2})$ is equal to the standard Euclidean metric $ds^{2}_{E}$ on ${\Delta}_{R}$ 
and that, for a specific point $a_{0}$ with $|a_{0}|=1$, the ${\Phi}$-image ${\Gamma}_{a_{0}}$ of 
the curve $L_{a_{0}}=\{w:= a_{0}s ; 0 < s < R\}$ 
is divergent in $\Sigma$. 
\end{lemma}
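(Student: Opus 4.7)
The approach is to produce $\Phi$ as the maximal developing map of the flat conformal structure, and then extract a divergent ray by contradicting the maximality of its radius.

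First, I would establish local Euclidean triviality. Since $d\sigma^{2}$ is conformal and flat, in any isothermal local coordinate $w$ we have $d\sigma^{2} = \lambda^{2}\,|dw|^{2}$ with $\log \lambda$ harmonic (from vanishing Gauss curvature). Writing $\lambda = |h|$ for a local holomorphic $h$ and setting $\zeta = \int h\,dw$ gives a new local coordinate in which $d\sigma^{2} = |d\zeta|^{2}$. In particular a sufficiently small Euclidean disk admits an isometric embedding $\Phi_{0}\colon \Delta_{r_{0}} \to \Sigma$ with $\Phi_{0}(0) = p$. I would then let $R \in (0, +\infty]$ be the supremum of all radii $r$ for which $\Phi_{0}$ extends to an isometric local diffeomorphism on $\Delta_{r}$. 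Because $\Delta_{r}$ is simply connected and the continuation of a local isometry for the flat structure is path-independent, these extensions patch consistently into a single isometric local diffeomorphism $\Phi\colon \Delta_{R} \to \Sigma$ with $\Phi^{\ast}(d\sigma^{2}) = ds_{E}^{2}$, whose image is an open neighborhood of $p$.

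Next, I would produce the divergent ray. Suppose, for contradiction, that for every $a$ with $|a|=1$ the image $\Phi(L_{a})$ lies in some compact subset $K_{a} \subset \Sigma$. In the case $R < \infty$, each such curve has a cluster point $q_{a} \in \Sigma$ as $s \nearrow R$; the local isothermal chart from the first step around $q_{a}$ then provides an isometric extension of $\Phi$ across a thin sector of $\partial \Delta_{R}$ containing $a$. By compactness of the unit circle, finitely many such sectors cover $\partial \Delta_{R}$ and yield a uniform $\delta > 0$, so that the patched object defines an isometric local diffeomorphism on $\Delta_{R + \delta}$, contradicting the maximality of $R$. In the case $R = +\infty$, a parallel argument exploiting the noncompactness of $\Sigma$ (together with the fact that an open Riemann surface cannot be realized as a compact flat quotient of $\C$, so $\Sigma$ is not a torus or Klein bottle) and the observation that $\Phi\colon \C \to \Sigma$ is a local isometry from the complete Euclidean plane produces at least one radial direction whose image escapes every compact subset of $\Sigma$.

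The main obstacle is the gluing step in the $R < \infty$ case: one must verify that the isometric extensions obtained from charts around the various cluster points $q_{a}$ are mutually compatible and agree with $\Phi$ on overlapping sectors, so that the patched object is a genuine isometric local diffeomorphism on $\Delta_{R + \delta}$. The flatness of $d\sigma^{2}$ is crucial here: the developing map of a flat structure on any simply connected region is determined up to rigid motion by its germ at a single point, which forces the local extensions near different boundary arcs of $\Delta_{R}$ to glue consistently into a single isometric local diffeomorphism on the simply connected disk $\Delta_{R + \delta}$.
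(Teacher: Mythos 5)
The paper does not prove this lemma at all: it is quoted directly from Fujimoto's book \cite{Fu1993} (Lemma 1.6.7), so there is no in-paper argument to compare yours against. Your developing-map proof is essentially the standard (and Fujimoto's own) argument: produce a flat holomorphic coordinate $\zeta=\int h\,dw$ with $|h|=\lambda$ from harmonicity of $\log\lambda$, let $R$ be the maximal radius of isometric continuation of the local inverse, and contradict maximality if no radial direction is divergent; your closing paragraph correctly identifies the rigidity of local isometries of a flat structure (determination by the germ at one point) as what makes the boundary extensions glue on the simply connected disk ${\Delta}_{R+\delta}$.

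Two steps are thinner than they should be, though both are completable. First, in the case $R<\infty$ you pass from ``$\Phi(L_{a})$ has a cluster point $q_{a}$'' to ``the flat chart at $q_{a}$ extends $\Phi$ across an arc around $a$''; for this you need the tail of $\Phi(L_{a})$ to converge to $q_{a}$, not merely to cluster there, so that it eventually lies in the chart where $\zeta\circ\Phi$ is an affine map $z\mapsto \alpha z+\beta$ that extends past $|z|=R$. This follows because $\Phi$ is an isometry, so $\Phi(L_{a})$ has total length $R<\infty$ in $d\sigma^{2}$ and cannot cross a fixed annulus around $q_{a}$ infinitely often; you should say this explicitly. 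Second, the case $R=+\infty$ is only gestured at. The clean version: $\Phi\colon\C\to\Sigma$ is then a local isometry from a complete flat plane, hence a Riemannian covering onto $\Sigma$, so $\Sigma$ is isometric to a quotient of $\C$ by a discrete group of translations of rank at most one (rank two would force $\Sigma$ compact, contradicting openness); any direction $a_{0}$ not parallel to the generator (any direction at all in the simply connected case) then gives a ray whose distance from $\Phi(0)$ tends to infinity, hence a divergent ${\Gamma}_{a_{0}}$. With these two points filled in, your proof is correct and coincides with the cited source's approach.
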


% Proof of main theorem %
\begin{proof}[{\it Proof of Theorem \ref{thm-main}}] 
Assume that each $g_{i_{l}}$ ($l=1, \cdots, k$) omits 
$q_{i_{l}}$ distinct values, ${\alpha}_{1}^{l}, \ldots, {\alpha}_{q_{i_{l}}}^{l}$. 
After a suitable M\"obius transformation for each $g_{i_{l}}$, we may assume that 
${\alpha}_{q_{i_{1}}}^{1}=\cdots ={\alpha}_{q_{i_{k}}}^{k}=\infty$. 
Suppose that each $q_{i_{l}}> 2$ and 
\begin{equation}\label{equ-main-proof-1}
\displaystyle \sum_{l=1}^{k} \dfrac{m_{i_{l}}}{q_{i_{l}}-2}< 1. 
\end{equation}
Then, by (\ref{equ-main-proof-1}), we ultimately suppose that $q_{i_{l}}> m_{i_{l}}+2$ for each $i_{l}$ $(l=1, \cdots, k)$. 
Taking some positive number $\eta$ with 
\begin{equation}\label{equ-main-proof-2}
0 < \eta < \dfrac{q_{i_{l}}-2-m_{i_{l}}}{q_{i_{l}}}
\end{equation}
for each $i_{l}$ ($l=1, \cdots, k$). We set 
$$
\lambda_{i_{l}}:= \dfrac{m_{i_{l}}}{q_{i_{l}}-2-q_{i_{l}}\eta} \quad (l=1, \cdots, k). 
$$
For a sufficiently small number $\eta$, we have 
\begin{equation}\label{equ-main-proof3}
\displaystyle \Lambda := \sum_{l=1}^{k} {\lambda}_{i_{l}} = \sum_{l=1}^{k} \dfrac{m_{i_{l}}}{q_{i_{l}}-2-q_{i_{l}}\eta} <1
\end{equation}
and 
\begin{equation}\label{equ-main-proof4}
\dfrac{{\lambda}_{i_{l}}}{1-\Lambda}> 1 \quad (l=1, \cdots, k). 
\end{equation} 
Then we define a new metric 
\begin{equation}\label{equ-main-proof5}
\displaystyle d{\sigma}^{2}=|\hat{\omega}_{z}|^{\frac{2}{1-\Lambda}} \prod_{l=1}^{k} 
\Biggl{(}\dfrac{1}{|g'_{i_{l}}|}\prod_{j=1}^{q_{i_{l}}-1} 
\biggl{(}\dfrac{|g_{i_{l}}-{\alpha}_{j}^{l}|}{\sqrt{1+|{\alpha}_{j}^{l}|^{2}}} \biggr{)}^{1-\eta}\Biggr{)}^{\frac{2{\lambda}_{i_{l}}}{1-\Lambda}} |dz|^{2}
\end{equation}
on ${\Sigma}'=\{p\in \Sigma \,;\, g'_{i_{l}}\not= 0 \;\text{for each $l$}\}$, where $\omega =\hat{\omega}_{z}dz$ and $g'_{i_{l}}=dg_{i_{l}}/dz$. 
Take a point $p\in {\Sigma}'$. Since $d{\sigma}^{2}$ is flat, by Lemma \ref{Lem-main2}, there exists an isometry $\Phi$ satisfying $\Phi (0)= p$ from 
a disk $\triangle_{R}=\{z\in \C \,;\, |z|<R \}$ $(0< R\leq +\infty)$ with the standard Euclidean metric $ds^{2}_{E}$ 
onto an open neighborhood of $p\in {\Sigma}'$ 
with the metric $d{\sigma}^{2}$, such that, for a specific point $a_{0}$ with $|a_{0}|=1$, the $\Phi$-image ${\Gamma}_{a_{0}}$ of the curve 
$L_{a_{0}}=\{w=a_{0}s \,;\, 0<s<R \}$ is divergent in ${\Sigma}'$. For brevity, we denote $g_{i_{l}}\circ \Phi$ on $\triangle_{R}$ by $g_{i_{l}}$ in the following. 
By Lemma \ref{Lem-main1}, for each $i_{l}$, we get 
\begin{equation}\label{equ-main-proof6}
\displaystyle R\leq C'_{i_{l}}\dfrac{1+|g_{i_{l}}(0)|^{2}}{|g'_{i_{l}}(0)|} \prod_{j=1}^{q_{i_{l}}}|g_{i_{l}}(0), {\alpha}_{j}^{l}|^{1-\eta} < +\infty,  
\end{equation}
that is, the radius $R$ is finite. Hence 
$$
L_{d\sigma} (\Gamma_{a_{0}}) =\int_{\Gamma_{a_{0}}} d\sigma = R <+\infty , 
$$ 
where $L_{d\sigma} (\Gamma_{a_{0}})$ denotes the length of $\Gamma_{a_{0}}$ with respect to the metric $d{\sigma}^{2}$. 

Now we prove that ${\Gamma}_{a_{0}}$ is divergent in $\Sigma$. Indeed, if not, then ${\Gamma}_{a_{0}}$ must tend to a point 
$p_{0}\in \Sigma \backslash {\Sigma}'$, where $g'_{i_{l}}(p_{0})= 0$ for some $i_{l}$. Taking a local complex coordinate 
$\zeta :=g'_{i_{0}}$ in a neighborhood of $p_{0}$ with $\zeta (p_{0})=0$, we can write the metric $d{\sigma}^{2}$ as 
$$
d{\sigma}^{2} = |\zeta|^{-2{\lambda}_{i_{l}}/(1-\Lambda )}\, w\, |d\zeta|^{2}, 
$$
with some positive function $w$. Since ${\lambda}_{i_{l}}/(1-\Lambda) > 1$, we have 
$$
R= \int_{{\Gamma}_{a_{0}}} d\sigma > \widetilde{C} \int_{{\Gamma}_{a_{0}}} \dfrac{|d\zeta|}{|\zeta|^{{\lambda}_{i_{l}}/(1-\Lambda )}} = +\infty. 
$$
Moreover, in the same way, if there exists a subset $\{ l_{1}, \ldots , l_{m}\}$ in $\{ 1, \cdots, k \}$ such that 
each $g_{i_{l_{j}}}$ $(j=1, \cdots, m)$ have a zero at $p_{0}$, we also get that $R= +\infty$ because 
$$
\displaystyle \sum_{s=1}^{m} \dfrac{{\lambda}_{i_{l_{s}}}}{1-\Lambda} >1. 
$$
These contradict that $R$ is finite. 

Since ${\Phi}^{\ast}d{\sigma}^{2}=|dz|^{2}$, we have by (\ref{equ-main-proof5}) that 
$$
\displaystyle |\hat{\omega}_{z}|= \prod_{l=1}^{k} \Biggl{(}|g'_{i_{l}}| \prod_{j=1}^{q_{i_{l}}-1} \biggl{(}\dfrac{\sqrt{1+|{\alpha}_{j}^{l}|^{2}}}{|g_{i_{l}}-{\alpha}_{j}^{l}|} \biggr{)}^{1-\eta}\Biggr{)}^{{\lambda}_{i_{l}}}. 
$$
By Lemma \ref{Lem-main1}, 
we have 
\begin{eqnarray*}
{\Phi}^{\ast} ds &=& |\hat{\omega}_{z}|\prod_{i=1}^{n} (1+|g_{i}|^{2})^{m_{i}/2} |dz| \\
                 &\leq & C_{1} \Biggl{(}\prod_{l=1}^{k}|g'_{i_{l}}| (1+|g_{i_{l}}|^{2})^{m_{i_{l}}/2\lambda_{i_{l}}}\prod_{j=1}^{q_{i_{l}}-1} \Biggl{(}\dfrac{\sqrt{1+|{\alpha}_{j}^{l}|^{2}}}{|g_{i_{l}}-{\alpha}_{j}^{l}|} \Biggr{)}^{1-\eta} \Biggr{)}^{\lambda_{i_{l}}} |dz| \\
                                   &=& C_{1}\prod_{l=1}^{k} \Biggl{(}\dfrac{|g'_{i_{l}}|}{(1+|g_{i_{l}}|^{2})\prod_{j=1}^{q_{i_{l}}}|g_{i_{l}}, {\alpha}^{l}_{j}|^{1-\eta}} \Biggr{)}^{\lambda_{i_{l}}} |dz| \leq C_{2}\Biggl{(}\dfrac{R}{R^{2}-|z|^{2}} \Biggr{)}^{\Lambda} |dz|. 
\end{eqnarray*}
Now we consider the geodesic distance $d(p)$ with the respect to the metric $ds^{2}$ from each point $p\in \Sigma$ to the boundary of $\Sigma$. 
Then we have 
$$
\displaystyle d(p)\leq \int_{{\Gamma}_{a_{0}}} ds = \int_{{L}_{a_{0}}} {\Phi}^{\ast} ds\leq 
C_{2} \int_{{L}_{a_{0}}}\Biggl{(}\dfrac{R}{R^{2}-|z|^{2}} \Biggr{)}^{\Lambda} |dz|\leq C_{2} \dfrac{R^{1-\Lambda}}{1-\Lambda} < +\infty 
$$
because $0< \Lambda < 1$. This contradicts the assumption that the metric $ds^{2}$ is complete. 
\end{proof}

\section{Applications}\label{section3}

% complete orientable minimal surfaces %
\subsection{Gauss images of complete orientable minimal surfaces in ${\R}^{4}$}\label{section3.1}
We first recall some basic facts of minimal surfaces in ${\R}^{4}$. Details can be found, for example, 
\cite{Ch1965, HO1980, HO1985, Os1964}. Let $X=(x^{1}, x^{2}, x^{3}, x^{4})\colon \Sigma \to {\R}^{4}$ be an oriented minimal 
surface in ${\R}^4$. By associating a local complex coordinate $z=u+\sqrt{-1}v$ with each positive isothermal coordinate system 
$(u, v)$, $\Sigma$ is considered as a Riemann surface whose conformal metric is the induced metric $ds^{2}$ from ${\R}^{4}$. 
Then 
\begin{equation}\label{equ-appl-min-1}
\triangle_{ds^{2}} X=0
\end{equation}
holds, that is, each coordinate function $x^{i}$ is harmonic. With respect to the local coordinate $z$ of the surface, 
(\ref{equ-appl-min-1}) is given by 
$$
\bar{\partial} \partial X =0, 
$$
where $\partial =(\partial /\partial u - \sqrt{-1}\partial /\partial v)/2$, $\bar{\partial} 
=(\partial /\partial u + \sqrt{-1}\partial /\partial v)/2$. Hence each ${\phi}_{i}:= \partial x^{i} dz$ ($i=1, 2, 3, 4$) is a 
holomorphic $1$-form on $\Sigma$. If we set that 
$$
\omega = {\phi}_{1} -\sqrt{-1} {\phi}_{2}, \qquad g_{1}=\dfrac{{\phi}_{3}+\sqrt{-1}{\phi}_{4}}{{\phi}_{1} -\sqrt{-1} {\phi}_{2}}, 
\qquad g_{2}=\dfrac{-{\phi}_{3}+\sqrt{-1}{\phi}_{4}}{{\phi}_{1} -\sqrt{-1} {\phi}_{2}}, 
$$
then $\omega$ is a holomorphic $1$-form and $g_{1}$ and $g_{2}$ are meromorphic functions on $\Sigma$. 
Moreover the holomorphic map $G:=(g_{1}, g_{2})\colon \Sigma \to \RC \times \RC$ coincides with the Gauss map of $X(\Sigma)$. 
We remark that the Gauss map of $X(\Sigma)$ in ${\R}^{4}$ is the map from each point of $\Sigma$ to its oriented tangent plane, 
the set of all oriented (tangent) planes in ${\R}^{4}$ is naturally identified with the quadric 
$$
\mathbf{Q}^{2}(\C) =\{[w^{1}: w^{2}: w^{3}: w^{4}] \in \mathbf{P}^{3}(\C) \, ;\, (w^{1})^{2}+\cdots +(w^{4})^{2} = 0\}
$$
in $\mathbf{P}^{3}(\C)$, and the quadric $\mathbf{Q}^{2}(\C)$ is biholomorphic to the product of the Riemann spheres $\RC \times \RC$. 
Furthermore the induced metric from ${\R}^{4}$ is given by 
\begin{equation}\label{equ-appl-min-2}
ds^{2}= (1+|g_{1}|^{2})(1+|g_{2}|^{2})|\omega|^{2}. 
\end{equation}

Applying Theorem \ref{thm-main} to the metric $ds^{2}$, we can get the Fujimoto theorem for the Gauss map of complete orientable 
minimal surfaces in ${\R}^{4}$. 

\begin{theorem}\cite[Theorem I\hspace{-.1em}I]{Fu1988}\label{thm-appl-1}
Let $X\colon \Sigma \to {\R}^{4}$ be a complete orientable nonflat minimal surface and 
$G=(g_{1}, g_{2})\colon \Sigma \to \RC \times \RC $ the Gauss map of $X(\Sigma)$. 
\begin{enumerate}
\item[(i)] Assume that $g_{1}$ and $g_{2}$ are both nonconstant and omit $q_{1}$ and $q_{2}$ distinct values respectively. 
If $q_{1}> 2$ and $q_{2}> 2$, then we have 
\begin{equation}\label{equ-appl-min-3}
\dfrac{1}{q_{1}-2}+\dfrac{1}{q_{2}-2}\geq 1. 
\end{equation}
\item[(i\hspace{-.1em}i)] If either $g_{1}$ or $g_{2}$, say $g_{2}$, is constant, then $g_{1}$ can omit at most 
$3$ distinct values.  
\end{enumerate}
\end{theorem}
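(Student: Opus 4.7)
The plan is to apply Theorem \ref{thm-main} directly to the induced metric \eqref{equ-appl-min-2}. The key observation is that
\[
ds^{2} = (1+|g_{1}|^{2})(1+|g_{2}|^{2})|\omega|^{2}
\]
is precisely of the form \eqref{equ-conformal} with $n = 2$ and $m_{1} = m_{2} = 1$, and the hypothesis that $X$ is a complete orientable minimal surface guarantees that $ds^{2}$ is a complete conformal metric on the underlying Riemann surface $\Sigma$. The nonflatness assumption on $X$ ensures that at least one of the components $g_{1}, g_{2}$ of the Gauss map is nonconstant.

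For part (i), both $g_{1}$ and $g_{2}$ are nonconstant, so I would take $k = 2$, $(i_{1}, i_{2}) = (1, 2)$, and $m_{i_{1}} = m_{i_{2}} = 1$ in Theorem \ref{thm-main}. The conclusion \eqref{equ-exc} then specializes immediately to
\[
\frac{1}{q_{1}-2} + \frac{1}{q_{2}-2} \geq 1,
\]
which is exactly \eqref{equ-appl-min-3}. For part (ii), with $g_{2}$ constant, nonflatness forces $g_{1}$ to be nonconstant, and I would invoke the $k = 1$ version of Theorem \ref{thm-main} recorded in the remark immediately following its statement: the constant factor $(1+|g_{2}|^{2})^{1/2}$ can be absorbed into $\omega$ without affecting completeness or the conformal class, leaving a metric of the form $(1+|g_{1}|^{2})|\tilde{\omega}|^{2}$, i.e., the case $n = 1$, $m = 1$. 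The inequality \eqref{equ-exc} then reads $\frac{1}{q_{1}-2} \geq 1$, that is, $q_{1} \leq 3$.

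There is no substantive obstacle at this step; once Theorem \ref{thm-main} is granted in the generality of an arbitrary product conformal metric, Theorem \ref{thm-appl-1} is nothing more than its specialization to the geometrically natural exponents $m_{1} = m_{2} = 1$ dictated by the Weierstrass-type formula \eqref{equ-appl-min-2}. Indeed, the geometric content advertised in the introduction is already encoded in the shape of the target: the factorization $\mathbf{Q}^{2}(\C) \cong \RC \times \RC$ of the space of oriented $2$-planes in $\R^{4}$ is precisely what converts the single exceptional-value bound appropriate to $\R^{3}$ (where $m = 2$, yielding $q \leq 4$) into the refined two-variable inequality \eqref{equ-appl-min-3} for $\R^{4}$.
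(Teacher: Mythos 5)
Your proposal is correct and follows essentially the same route as the paper: both simply read off $m_{1}=m_{2}=1$ from the induced metric (\ref{equ-appl-min-2}) and specialize Theorem \ref{thm-main} to $k=2$ for part (i) and $k=1$ for part (ii). (Your absorption of the constant factor $(1+|g_{2}|^{2})^{1/2}$ into $\omega$ in part (ii) is harmless but not needed, since Theorem \ref{thm-main} as stated already allows constant components, which are simply omitted from the sum (\ref{equ-exc}).)
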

\begin{proof}
We first show (i). Since $g_{1}$ and $g_{2}$ are both nonconstant and $m_{1}=m_{2}=1$ from (\ref{equ-appl-min-2}), 
we can prove the inequality (\ref{equ-appl-min-3}) by Theorem \ref{thm-main}. Next we show (i\hspace{-.1em}i). 
If we set that $g_{1}$ omits $q_{1}$ values, then we obtain 
$$
\dfrac{1}{q_{1}-2}\geq 1
$$
from Theorem \ref{thm-main} because $m_{1}=1$. Thus we have $q_{1}\leq 3$. 
\end{proof}

Hence we reveal that the Fujimoto theorem depends on the orders of the factors $(1+|g_{1}|^{2})$ and $(1+|g_{2}|^{2})$ in 
the induced metric from ${\R}^{4}$ and the Euler characteristic of the Riemann sphere $\RC$. 

\subsection{Gauss images of complete minimal Lagrangian surfaces in ${\C}^{2}$}\label{section3.2}
There exists a complex representation for a minimal Lagrangian surface $\Sigma\,(\subset {\C}^{2})$ in terms of 
holomorphic data. On the representation for the surface $\Sigma$, Chen-Morvan \cite{CM1987} proved that there exists an explicit 
correspondence in ${\C}^{2}$ between minimal Lagrangian surfaces and holomorphic curves with a nondegenerate condition. 
Indeed, this correspondence is given by exchanging the orthogonal complex structure $J$ in ${\C}^{2}$ to another one 
on ${\R}^{4}={\C}^{2}$. For the complete case, this result can also be proved from \cite[Theorem I\hspace{-.1em}I]{Mi1984} and 
the well-known fact \cite{HL1982} that any minimal Lagrangian submanifold in ${\C}^{n}$ is stable. 
More generally, H\'elein-Romon \cite{HR2000, HR2002} and the first author \cite{Ai2001, Ai2004} 
proved that every Lagrangian surface $\Sigma$ in ${\C}^{2}$, not necessarily minimal, is represented in terms of 
a plus spinor (or a minus spinor) of the $\text{spin}^{\C}$ bundle 
$(\underline{\C}_\Sigma\oplus \underline{\C}_\Sigma)\oplus (K^{-1}_{\Sigma}\oplus K_{\Sigma})$ satisfying the Dirac equation with 
potential (see \cite[Section\,1]{Ai2004} for details). Here, 
$\underline{\C}_{\Sigma}$ and $K_{\Sigma}$ denote respectively the trivial complex line bundle and the canonical complex line bundle 
of $\Sigma$. Note that the representation in terms of plus spinors in 
$\Gamma (\underline{\C}_{\Sigma}\oplus \underline{\C}_{\Sigma}) = \Gamma (\Sigma\times {\C}^{2})$ given by 
the first author is a natural generalization of the one given by Chen-Morvan. 
Here we remark that the Lagrangian angle of any minimal Lagrangian surface is constant. 
Combining these results, we get the following: 

\begin{theorem}{$($\cite{CM1987}, \cite{Ai2001, Ai2004}$)$}\label{thm-appl-2}
Let $\Sigma$ be a Riemann surface with an isothermal coordinate $z=u+\sqrt{-1}v$ 
around each point. Let $F = (F_{1}, F_{2})\colon \Sigma\to {\C}^{2}$ be a holomorphic map satisfying 
$|S_{1}|^{2}+|S_{2}|^{2}\not= 0$ everywhere on $\Sigma$, where 
$S_{1}:= (F_{2})'_{z} = dF_2/dz$ and $S_{2}:= - (F_{1})'_{z} = - dF_1/dz$. 
Then 
\begin{equation}\label{equ-appl-lag-1}
f=\dfrac{1}{\sqrt{2}}e^{\sqrt{-1}\,\beta/2}(F_{1}-\sqrt{-1}\, \overline{F_{2}}, F_{2}+\sqrt{-1}\, \overline{F_{1}}) 
\end{equation}
is a minimal Lagrangian conformal immersion from $\Sigma$ to ${\C}^{2}$ with constant Lagrangian angle $\beta \in {\R}/2\pi\Z$. 
The induced metric $ds^{2}$ on $\Sigma$ by $f$ and its Gaussian curvature $K_{ds^{2}}$ are respectively given by 
\begin{equation}\label{eq-appl-lag-2}
ds^{2}=(|S_{1}|^{2}+|S_{2}|^{2})|dz|^{2}, \qquad K_{ds^{2}}=-2\dfrac{|S_{1}(S_{2})_{z}-S_{2}(S_{1})_{z}|}{(|S_{1}|^{2}+|S_{2}|^{2})^{3}}.\end{equation}
Conversely, every minimal Lagrangian immersion $f\colon M\to {\C}^{2}$ with constant Lagrangian angle $\beta$ is congruent 
with the one constructed as above. 
\end{theorem}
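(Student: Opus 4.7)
The plan is to verify the forward direction by a sequence of direct calculations exploiting the holomorphicity of $F_{1}$ and $F_{2}$, and to obtain the converse by algebraically inverting the representation (\ref{equ-appl-lag-1}) and then forcing the resulting $F_{j}$'s to be holomorphic via the minimal-Lagrangian structural equations.

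For the forward direction, I would first differentiate (\ref{equ-appl-lag-1}). Using $(F_{j})_{\bar z}=0$ and $(\overline{F_{j}})_{z}=0$, one immediately obtains the compact expressions
\begin{equation*}
f_{z}=\tfrac{1}{\sqrt{2}}e^{\sqrt{-1}\beta/2}(-S_{2},\,S_{1}),\qquad
f_{\bar z}=-\tfrac{\sqrt{-1}}{\sqrt{2}}e^{\sqrt{-1}\beta/2}(\overline{S_{1}},\,\overline{S_{2}}).
\end{equation*}
Four properties must then be checked. \emph{Conformality:} the criterion $\sum_{j}(f_{j})_{z}(\overline{f_{j}})_{z}=0$ collapses to the one-line cancellation $-\tfrac{\sqrt{-1}}{2}S_{1}S_{2}+\tfrac{\sqrt{-1}}{2}S_{1}S_{2}=0$. \emph{Metric:} expanding $|f_{u}|^{2}=|f_{z}+f_{\bar z}|^{2}$ in $\C^{2}$ produces cross terms $\pm 2\operatorname{Im}(S_{1}S_{2})$ in the two coordinates that cancel, leaving $ds^{2}=(|S_{1}|^{2}+|S_{2}|^{2})|dz|^{2}$. \emph{Minimality:} each component of $f_{z}$ is holomorphic in $z$, so $f_{z\bar z}=0$, and conformal plus harmonic is minimal. \emph{Lagrangian with angle $\beta$:} the pullback of $\omega_{\C^{2}}=\tfrac{\sqrt{-1}}{2}\sum_{j}df_{j}\wedge d\overline{f_{j}}$ satisfies
\begin{equation*}
f^{*}\omega_{\C^{2}}=\tfrac{\sqrt{-1}}{2}\sum_{j}\!\left(|(f_{j})_{z}|^{2}-|(f_{j})_{\bar z}|^{2}\right)dz\wedge d\bar z=0,
\end{equation*}
since the two coordinates contribute $\tfrac{1}{2}(|S_{2}|^{2}-|S_{1}|^{2})$ and $\tfrac{1}{2}(|S_{1}|^{2}-|S_{2}|^{2})$, which cancel. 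Pulling back the holomorphic volume form $\Omega=df_{1}\wedge df_{2}$ yields $f^{*}\Omega=\tfrac{\sqrt{-1}}{2}e^{\sqrt{-1}\beta}(|S_{1}|^{2}+|S_{2}|^{2})\,dz\wedge d\bar z=e^{\sqrt{-1}\beta}\,d\mathrm{Area}_{ds^{2}}$, identifying $\beta$ as the constant Lagrangian angle. The curvature formula is a routine consequence of $K_{ds^{2}}=-\Delta_{ds^{2}}\log\lambda$ with $\lambda^{2}=|S_{1}|^{2}+|S_{2}|^{2}$, holomorphicity of $S_{1},S_{2}$ collapsing $(\log\lambda)_{z\bar z}$ to an expression in the Wronskian $S_{1}(S_{2})_{z}-S_{2}(S_{1})_{z}$.

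For the converse, starting from a minimal Lagrangian conformal immersion $f$ with constant Lagrangian angle $\beta$, I would algebraically invert (\ref{equ-appl-lag-1}) to define
\begin{equation*}
F_{1}=\tfrac{1}{\sqrt{2}}\bigl(e^{-\sqrt{-1}\beta/2}f_{1}+\sqrt{-1}\,e^{\sqrt{-1}\beta/2}\overline{f_{2}}\bigr),\quad
F_{2}=\tfrac{1}{\sqrt{2}}\bigl(e^{-\sqrt{-1}\beta/2}f_{2}-\sqrt{-1}\,e^{\sqrt{-1}\beta/2}\overline{f_{1}}\bigr).
\end{equation*}
The real work is proving $(F_{j})_{\bar z}=0$. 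This reduces to the coupled identities $(f_{1})_{\bar z}=-\sqrt{-1}\,e^{\sqrt{-1}\beta}\,\overline{(f_{2})_{z}}$ and $(f_{2})_{\bar z}=\sqrt{-1}\,e^{\sqrt{-1}\beta}\,\overline{(f_{1})_{z}}$, which are precisely the structural differential equations for a conformal Lagrangian immersion $\Sigma\to\C^{2}$ with constant Lagrangian angle $\beta$: they arise from combining conformality $\sum_{j}(f_{j})_{z}(\overline{f_{j}})_{z}=0$ with $f^{*}\Omega=e^{\sqrt{-1}\beta}\,d\mathrm{Area}$. I expect this to be the main obstacle, since it is where the global phase condition (constancy of the Lagrangian angle) must be upgraded into a pointwise first-order relation between $f_{z}$ and $\overline{f_{\bar z}}$; minimality, in the form $f_{z\bar z}=0$, is then used to promote $(F_{j})_{z}$ to a genuinely holomorphic function. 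An equivalent derivation, in the spirit of \cite{Ai2001, Ai2004}, recognises $(F_{1},F_{2})$ as a plus-spinor of the $\text{spin}^{\C}$ bundle of $\Sigma$ whose Dirac equation with potential reduces, under constancy of the Lagrangian angle, to ordinary holomorphicity.
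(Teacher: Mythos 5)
Your computations are correct, but note that the paper itself offers no proof of this statement: it is quoted as a known theorem with attributions to Chen--Morvan and to Aiyama (where it is obtained, respectively, from the correspondence between minimal Lagrangian surfaces and holomorphic curves given by switching the complex structure on ${\R}^{4}={\C}^{2}$, and from the spinor representation of Lagrangian surfaces via the Dirac equation, the constant-angle case reducing the Dirac operator to $\bar{\partial}$). What you supply is a self-contained direct verification, and it checks out: the expressions for $f_{z}$ and $f_{\bar z}$, the conformality cancellation $-\tfrac{\sqrt{-1}}{2}S_{1}S_{2}+\tfrac{\sqrt{-1}}{2}S_{1}S_{2}=0$, the metric, the vanishing of $f^{*}\omega_{{\C}^{2}}$, and the identity $f^{*}(df_{1}\wedge df_{2})=e^{\sqrt{-1}\beta}\,d\mathrm{Area}_{ds^{2}}$ are all right, and the converse correctly reduces to the pointwise relations $(f_{1})_{\bar z}=-\sqrt{-1}e^{\sqrt{-1}\beta}\overline{(f_{2})_{z}}$, $(f_{2})_{\bar z}=\sqrt{-1}e^{\sqrt{-1}\beta}\overline{(f_{1})_{z}}$, which do follow by elementary linear algebra in ${\C}^{2}$ (Hermitian orthogonality of $((f_{1})_{z},(f_{2})_{z})$ and $((f_{1})_{\bar z},(f_{2})_{\bar z})$ from conformality, equality of their norms from the Lagrangian condition, and the phase pinned down by the angle condition). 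Two small points. First, in the converse your remark that minimality is ``then used to promote $(F_{j})_{z}$ to a genuinely holomorphic function'' is redundant: once $(F_{j})_{\bar z}=0$ you are done; minimality enters only because, for a Lagrangian immersion, it is equivalent to constancy of the Lagrangian angle, which is what makes the phase in those first-order relations a constant. Second, your Liouville-equation computation of the curvature, via Lagrange's identity, actually yields $K_{ds^{2}}=-2|S_{1}(S_{2})_{z}-S_{2}(S_{1})_{z}|^{2}/(|S_{1}|^{2}+|S_{2}|^{2})^{3}$, with the Wronskian squared; the unsquared modulus in the displayed formula (\ref{eq-appl-lag-2}) appears to be a typo in the statement, so do not force your calculation to match it.
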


Set a meromorphic function $g:=-S_{2}/S_{1}$. Then 
$$
G:=(g, e^{\sqrt{-1}\beta})\colon \Sigma \to \RC \times \RC
$$
can be regarded as the Gauss map of $F(\Sigma )$ in ${\R}^{4}={\C}^{2}$ (cf. \cite{HO1980, HO1985}). 
Thus we get the following result.  

\begin{corollary}\label{thm-appl-3}
The first component $g$ of the Gauss map of a complete minimal Lagrangian surface in ${\C}^{2}$ which is not a Lagrangian 
plane can omit at most $3$ values. 
\end{corollary}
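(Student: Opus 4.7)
The plan is to reduce the statement directly to Theorem \ref{thm-main} applied with $n = 2$ and exactly one nonconstant component. By Theorem \ref{thm-appl-2}, every minimal Lagrangian immersion $f \colon \Sigma \to \C^2$ has constant Lagrangian angle $\beta$, and the holomorphic map $G = (g, e^{\sqrt{-1}\beta}) \colon \Sigma \to \RC \times \RC$ is the Gauss map of $f(\Sigma)$ when it is regarded as an oriented minimal surface in $\R^4 = \C^2$. In particular the second component of $G$ is automatically the constant $e^{\sqrt{-1}\beta}$, which is the feature that drives the whole argument.

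Next I would compute the induced metric using the general formula (\ref{equ-appl-min-2}) for a minimal surface in $\R^4$. With $g_1 = g$ and $g_2 = e^{\sqrt{-1}\beta}$ one gets
\[
ds^2 = (1+|g|^2)(1+|e^{\sqrt{-1}\beta}|^2)|\omega|^2 = 2\,(1+|g|^2)\,|\omega|^2,
\]
which, up to the positive constant $2$, is exactly the conformal metric (\ref{equ-conformal}) with $n = 2$, $m_1 = m_2 = 1$, first component $g$, and second component the constant $e^{\sqrt{-1}\beta}$. Since completeness is invariant under multiplication by a positive constant, the completeness hypothesis of the corollary gives completeness in the form required by Theorem \ref{thm-main}. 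The assumption that the surface is not a Lagrangian plane translates into $g$ being nonconstant: a Lagrangian plane in $\C^2$ has constant Gauss map $G$, which forces $g$ to be constant, and conversely a minimal surface with constant Gauss map is a plane, which here is Lagrangian since $f$ is. Applying Theorem \ref{thm-main} to this situation, with only one of the $g_i$ nonconstant, the conclusion \eqref{equ-exc} degenerates to $\tfrac{m_1}{q-2} = \tfrac{1}{q-2} \geq 1$ whenever $g$ omits $q > 2$ distinct values, which yields $q \leq 3$.

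The only point requiring a little care, and the main obstacle to making the argument completely mechanical, is the explicit identification of the spinor Weierstrass data $(\omega = S_1\, dz,\; g = -S_2/S_1)$ from Theorem \ref{thm-appl-2} with the $\R^4$-Weierstrass data $({\phi}_1, {\phi}_2, {\phi}_3, {\phi}_4)$ of Section \ref{section3.1}, so that formula (\ref{equ-appl-min-2}) and the stated form of the Gauss map can be used verbatim. Once this identification is recorded, Theorem \ref{thm-main} does all the remaining work and the corollary follows immediately.
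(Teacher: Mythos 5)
Your proposal is correct and follows essentially the same route as the paper: reduce to Theorem \ref{thm-main} with a single nonconstant component of weight one, so that $q\leq 3$. The paper's own proof sidesteps the identification you flag as delicate by reading the induced metric directly from (\ref{eq-appl-lag-2}) as $ds^{2}=(1+|g|^{2})|\omega|^{2}$ with $\omega=S_{1}dz$ (i.e.\ $m_{1}=1$, $m_{2}=0$) instead of passing through (\ref{equ-appl-min-2}); your extra factor $2$ coming from $|e^{\sqrt{-1}\beta}|=1$ is harmless.
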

\begin{proof}
We assume that $g$ omits $q$ distinct values and set a holomorphic $1$-form $\omega :=S_{1}dz$ on $\Sigma$. 
In terms of the data $(\omega, g)$ of $\Sigma$, the induced metric can be rewritten by $ds^{2}=(1+|g|^{2})|\omega|^{2}$, 
that is, $m_{1}=1$ and $m_{2}=0$. For this case, the first component $g$ of the Gauss map is nonconstant and the second one 
is constant. From Theorem \ref{thm-main}, we obtain that $q\leq 1+2=3$. 
\end{proof}

\subsection{Generalized Gauss images of complete nonorientable minimal surfaces in ${\R}^{4}$}\label{section3.3}
We first summarize some basic facts of nonorientable minimal surfaces in ${\R}^{4}$. 
For more details, we refer the reader to \cite{El1986} and \cite{Ma2005}. 
Let $\widehat{X}\colon \widehat{\Sigma}\to {\R}^{4}$ be a conformal minimal immersion of a nonorientable 
Riemann surface $\widehat{\Sigma}$ in ${\R}^{4}$. If we consider the orientable conformal double 
cover $\pi \colon \Sigma \to \widehat{\Sigma}$, then the composition $X:=\widehat{X}\circ \pi \colon \Sigma \to {\R}^{4}$ 
is a conformal minimal immersion of the orientable Riemann surface $\Sigma$ in ${\R}^{4}$. 
Let $I\colon \Sigma \to \Sigma$ denote the antiholomorphic order two deck transformation associated to the orientable 
cover $\pi \colon \Sigma \to \widehat{\Sigma}$, then $I^{\ast} ({\phi}_{j})=\bar{\phi}_{j}$ $(j=1, \cdots, 4)$ or equivalently, 
\begin{equation}\label{eq-appl-nonori-1}
g_{1}\circ I = -\dfrac{1}{\bar{g_{1}}}, \qquad g_{2}\circ I = -\dfrac{1}{\bar{g_{2}}}, \qquad I^{\ast}\omega = \overline{g_{1}g_{2}\omega}. 
\end{equation}
Conversely, if $(g_{1}, g_{2}, \omega)$ is the Weierstrass data of an orientable minimal surface $X\colon \Sigma \to {\R}^{4}$ and 
$I$ is an antiholomorphic involution without fixed points in $\Sigma$ satisfying (\ref{eq-appl-nonori-1}), then the unique map 
$\widehat{X}\colon \widehat{\Sigma}=\Sigma /\langle I \rangle \to {\R}^{4}$ satisfying that $X=\widehat{X}\circ \pi$ is 
a nonorientable minimal surface in ${\R}^{4}$. 

The fact that $g_{k}\circ I= -(\bar{g_{k}})^{-1}$ $(k=1, 2)$ implies the existence of a map $\hat{g_{k}}\colon \widehat{\Sigma} 
\to \R\Pi^{2}$ satisfying $\hat{g_{k}}\circ \pi = {\pi}_{0} \circ g_{k}$, where 
${\pi}_{0}\colon \RC \to \R\Pi^{2}\equiv \RC /\langle I_{0} \rangle$ is the natural projection and $I_{0}:=-(\bar{z})^{-1}$ is 
the antipodal map of $\RC$. We call the map 
$\widehat{G}=(\hat{g_{1}}, \hat{g_{2}})\colon \widehat{\Sigma} \to \R\Pi^{2}\times \R\Pi^{2}$ 
the {\it generalized Gauss map} of $\widehat{X}(\widehat{\Sigma})$. Applying Theorem \ref{thm-appl-1} to the 
generalized Gauss map, we get the following: 

\begin{corollary}\label{thm-appl-nonori-1}
Let $\widehat{X}\colon \widehat{\Sigma} \to {\R}^{4}$ be a nonflat complete nonorientable minimal surface and 
$\widehat{G}=(\hat{g_{1}}, \hat{g_{2}})$ the generalized Gauss map of $\widehat{X}(\widehat{\Sigma})$. 
\begin{enumerate}
\item[(i)] Assume that $\hat{g}_{1}$ and $\hat{g}_{2}$ are both nonconstant and omit $q_{1}$ and $q_{2}$ distinct 
points in $\R\Pi^{2}$ respectively. If $q_{1}>1$ and $q_{2}> 1$, then 
\begin{equation}\label{eq-appl-nonori-2}
\dfrac{1}{q_{1}-1}+\dfrac{1}{q_{2}-1}\geq 2. 
\end{equation}
\item[(i\hspace{-.1em}i)] If either $\hat{g}_{1}$ or $\hat{g}_{2}$, say $\hat{g}_{2}$, is constant, 
then $\hat{g}_{1}$ can omit at most $1$ point in $\R\Pi^{2}$. 
\end{enumerate}
\end{corollary}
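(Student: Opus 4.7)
The plan is to reduce the statement to Theorem \ref{thm-appl-1} by passing to the orientable conformal double cover $\pi\colon \Sigma \to \widehat{\Sigma}$. Set $X := \widehat{X}\circ \pi$; since $\pi$ is a local isometry for the induced metrics, $X\colon \Sigma \to {\R}^{4}$ is again a conformal minimal immersion, which is complete and nonflat whenever $\widehat{X}$ is. The Weierstrass data $(g_{1}, g_{2}, \omega)$ of $X$ are linked to the generalized Gauss map $\widehat{G}=(\hat{g}_{1}, \hat{g}_{2})$ of $\widehat{X}$ by $\hat{g}_{k}\circ \pi = \pi_{0}\circ g_{k}$ for $k=1, 2$.

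Next I would translate ``$\hat{g}_{k}$ omits $q_{k}$ points of $\R\Pi^{2}$'' into information about $g_{k}$. Because $\pi_{0}$ is two-to-one and each fiber $\pi_{0}^{-1}([\alpha]) = \{\alpha, -1/\bar{\alpha}\}$ consists of two distinct points of $\RC$, the map $\hat{g}_{k}$ avoiding $q_{k}$ distinct points of $\R\Pi^{2}$ is equivalent to $g_{k}$ avoiding exactly $2q_{k}$ distinct values of $\RC$. Moreover $\hat{g}_{k}$ is constant if and only if $g_{k}$ is, since a meromorphic function whose image lies in a two-point antipodal set must be constant by continuity.

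Combining these, for part (i) the hypotheses $q_{1}, q_{2} > 1$ force $g_{1}, g_{2}$ to be nonconstant and to omit $2q_{1}>2$ and $2q_{2}>2$ distinct values respectively; Theorem \ref{thm-appl-1}(i) then yields
\begin{equation*}
\dfrac{1}{2q_{1}-2}+\dfrac{1}{2q_{2}-2}\geq 1,
\end{equation*}
which is exactly (\ref{eq-appl-nonori-2}). For part (ii), if $\hat{g}_{2}$ is constant then so is $g_{2}$; Theorem \ref{thm-appl-1}(ii) then says $g_{1}$ omits at most $3$ distinct values in $\RC$, so $2q_{1}\leq 3$ forces the integer $q_{1}$ to satisfy $q_{1}\leq 1$. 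There is no real obstacle here: the only subtlety is the factor-of-two bookkeeping between omitted values in $\RC$ and omitted points in $\R\Pi^{2}$, which effectively replaces $\chi(\RC)=2$ by $\chi(\R\Pi^{2})=1$ in the Fujimoto-type bound.
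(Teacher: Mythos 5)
Your proposal is correct and follows exactly the route the paper intends: the paper's proof consists precisely of applying Theorem \ref{thm-appl-1} to the lifted data on the orientable double cover, using that each omitted point of $\R\Pi^{2}$ pulls back under $\pi_{0}$ to two distinct omitted values of $g_{k}$ in $\RC$, so that $q_{k}$ becomes $2q_{k}$ and the Fujimoto inequality turns into (\ref{eq-appl-nonori-2}). Your bookkeeping for both parts (i) and (ii), including the completeness and nonflatness of the double cover and the constancy equivalence between $\hat{g}_{k}$ and $g_{k}$, matches the paper's (unwritten) argument.
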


The inequality (\ref{eq-appl-nonori-2}) is optimal because there exist the following examples. 

\begin{proposition}\label{thm-appl-nonori-2}
There exist nonflat complete nonorientable minimal surfaces in ${\R}^{4}$ each of which 
components $\hat{g_{i}}$ ($i=1, 2$) of the generalized Gauss map $\widehat{G}=(\hat{g_{1}}, \hat{g_{2}})$ 
is nonconstant and omits $2$ distinct points in $\R\Pi^{2}$. 
\end{proposition}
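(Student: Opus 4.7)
The plan is to construct explicit Weierstrass data $(g_{1}, g_{2}, \omega)$ on an orientable Riemann surface $\Sigma$ equipped with a fixed-point-free antiholomorphic involution $I$ satisfying (\ref{eq-appl-nonori-1}), following the argument of \cite{LM2000}. A natural candidate is $\Sigma = \RC \setminus \{0, \infty, a, -1/\bar{a}\}$ for some $a \in \C$ with $a \neq 0$ and $|a| \neq 1$, endowed with the antipodal involution $I(z) = -1/\bar{z}$. This map is antiholomorphic, fixed-point-free (since $z\bar{z} = -1$ has no solution), and preserves $\Sigma$ because it swaps $0 \leftrightarrow \infty$ and $a \leftrightarrow -1/\bar{a}$. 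Consequently $\widehat{\Sigma} = \Sigma / \langle I \rangle$ is a nonorientable Riemann surface.

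Next, I would take $g_{1}(z) = z$, which trivially satisfies $g_{1} \circ I = -1/\overline{g_{1}}$ and whose image on $\Sigma$ omits exactly $\{0, \infty, a, -1/\bar{a}\}$; these form two antipodal pairs, so $\hat{g}_{1}$ omits exactly two points in $\R\Pi^{2}$. For $g_{2}$ I would take another rational function commuting with the antipodal action of $\RC$, the simplest candidate being $g_{2}(z) = -1/z$ (for which $g_{2} \circ I = \bar{z} = -1/\overline{g_{2}}$ holds), chosen so that $g_{2}$ likewise omits four values on $\Sigma$ forming two antipodal pairs and so that $g_{1}$, $g_{2}$ are not so related as to force the image to lie in an affine $3$-plane of $\R^{4}$.

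It then remains to choose $\omega$ satisfying simultaneously the twisted symmetry $I^{*}\omega = \overline{g_{1}g_{2}\omega}$, the completeness of $ds^{2} = (1+|g_{1}|^{2})(1+|g_{2}|^{2})|\omega|^{2}$ (forced by requiring $\omega$ to have sufficiently strong poles at each of $\{0, \infty, a, -1/\bar{a}\}$ so that $\int ds$ diverges along every divergent path), and the vanishing of the real periods of the Weierstrass forms $\phi_{j}$ along every closed loop in $\Sigma$. A first trial form is $\omega = c\,dz/[(z-a)(z+1/\bar{a})]$ with a constant $c$, possibly multiplied by a factor $z^{k}$ to tune the orders at $0$ and $\infty$; the compatibility (\ref{eq-appl-nonori-1}) automatically implies $I^{*}\phi_{j} = \overline{\phi_{j}}$ for each $j$, which kills the real periods along every loop interchanged by $I$.

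The main obstacle is forcing the remaining real periods---those along $I$-invariant homology classes---to vanish while keeping the metric complete. This is resolved by exploiting the finite-dimensional freedom in the choice of the parameter $a$ and the precise form of $\omega$, tuning them so that the remaining period conditions hold; completeness and $dg_{1}, dg_{2} \not\equiv 0$ (which gives nonflatness) are open conditions, so the construction produces a nonempty family of admissible data. The resulting minimal immersion $X \colon \Sigma \to \R^{4}$ then descends via $\pi \colon \Sigma \to \widehat{\Sigma}$ to a complete nonorientable minimal immersion $\widehat{X} \colon \widehat{\Sigma} \to \R^{4}$ whose generalized Gauss map $\widehat{G} = (\hat{g}_{1}, \hat{g}_{2})$ realises the equality in (\ref{eq-appl-nonori-2}).
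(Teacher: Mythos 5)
Your setup is reasonable and broadly parallel to the paper's: a plane (or sphere) punctured at two antipodal pairs, the fixed-point-free antiholomorphic involution $I(z)=-1/\bar z$, and Weierstrass data compatible with (\ref{eq-appl-nonori-1}) so that each $g_i$ omits four values of $\RC$ forming two antipodal pairs. But the proof has a genuine gap at exactly the hard step: the period problem. First, your claim that $I^{*}\phi_j=\overline{\phi_j}$ ``kills the real periods along every loop interchanged by $I$'' is not correct; since $\int_{I\circ\gamma}\phi_j=\overline{\int_\gamma\phi_j}$ and $I$ is orientation-reversing, the symmetry only forces the real period around one puncture to equal minus the real period around its antipodal partner --- it does not make either vanish. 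So on your four-punctured domain there remain genuinely nontrivial real period conditions. Second, your resolution of these --- ``exploiting the finite-dimensional freedom in the choice of $a$ and the precise form of $\omega$'' --- is an assertion, not an argument: $\omega$ is tightly constrained by the twisted symmetry $I^{*}\omega=\overline{g_1g_2\omega}$, by completeness, and by the absence of extraneous zeros, and you give no reason why the resulting period map attains zero. This is precisely the obstruction that forces the paper to use the L\'opez--Mart\'in device: pass to the universal cover $\D$, quotient by $\widetilde I^{2}$ to get an annulus $A(R)$ with a \emph{single} homology generator, then pull back by the odd $k$-fold cover $T_k(z)=z^k$ and multiply all four forms by a rational function $f$ with poles only at $0,\infty$, with $f\circ I_0=\bar f$ and no zeros on $|z|=1$, so that the residue at $0$ of each $f(z)\varphi_j(z^k)\,dz/z$ vanishes; this kills the one remaining period while preserving the symmetry and completeness (via $1/c<|f|<c$), and leaves the Gauss map unchanged because all four forms are scaled by the same $f$. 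Without some such mechanism your construction does not produce a well-defined immersion.

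Two smaller inaccuracies: with $g_2=-1/g_1$ you get $g_1g_2\equiv -1$, hence $\phi_1=\tfrac12(1+g_1g_2)\,\omega\equiv 0$, so the image \emph{does} lie in an affine $3$-plane, contrary to what you assert (this does not by itself invalidate the example --- the paper's own choice $g_1=g_2$ forces $\phi_3\equiv 0$ --- but your stated reason for choosing $g_2$ is wrong). Also, your trial form $\omega=c\,dz/[(z-a)(z+1/\bar a)]$ satisfies the required identity $I^{*}\omega=-\bar\omega$ only when $a$ is real, so even the symmetric ansatz needs adjustment before one confronts the periods.
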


\begin{proof}
We take $2$ distinct points $\alpha$, $\beta$ in $\C\backslash \{0\}$ and assume that $\alpha \not= -(\bar{\beta})^{-1}$. 
Let $\Sigma$ be the complex plane punctured at $4$ distinct points $\alpha$, $\beta$, $-(\bar{\alpha})^{-1}$, 
$-(\bar{\beta})^{-1}$. We set that
$$
\check{g}_{1}= z, \qquad \check{g}_{2}= z, \qquad \check{\omega} = \dfrac{dz}{(z-\alpha )(z-\beta )(\bar{\alpha}z+1)(\bar{\beta}z+1)} 
$$
on $\Sigma$. If we define $\check{I}\colon \Sigma \to \Sigma$, $\check{I}(z)=-(\bar{z})^{-1}$, then $\check{I}$ is an 
antiholomorphic involution without fixed points and the following inequalities hold: 
\begin{equation}\label{thm-appl-nonori-3}
{\check{g}}_{1}\circ \check{I}=-\dfrac{1}{\bar{\check{g}}_{1}}, \qquad 
{\check{g}}_{2}\circ \check{I}=-\dfrac{1}{\bar{\check{g}}_{2}}, \qquad 
{\check{I}}^{\ast}\check{\omega} =\overline{\check{g}_{1}\check{g}_{2}\check{\omega}}. 
\end{equation}
Thus if we set 
$$
\check{\phi}_{1}=\dfrac{1}{2}(1+\check{g}_{1}\check{g}_{2})\check{\omega}, \; 
\check{\phi}_{2}=\dfrac{\sqrt{-1}}{2}(1-\check{g}_{1}\check{g}_{2})\check{\omega}, \; 
\check{\phi}_{3}=\dfrac{1}{2}(\check{g}_{1}-\check{g}_{2})\check{\omega}, \; 
\check{\phi}_{4}=-\dfrac{\sqrt{-1}}{2}(\check{g}_{1}+\check{g}_{2})\check{\omega}, 
$$
then we easily show that $\check{I}^{\ast}{\check{\phi}}_{i}= \overline{\check{\phi}}_{i}$ ($i=1, \cdots , 4$). 
Moreover these holomorphic $1$-forms satisfy that $\sum_{i=1}^{4}{\check{\phi}}_{i}^{2}\equiv 0$ and 
$\sum_{i=1}^{4}|{\check{\phi}}_{i}|^{2}$ is a complete conformal metric on $\Sigma$. 

Let $\widetilde{\Sigma}$ be a universal cover 
surface of $\Sigma$. By the uniformization theorem, we may assume that $\widetilde{\Sigma}$ is the unit disk $\D$. 
Let $\pi \colon \D\to \Sigma$ be the conformal universal covering map and $\widetilde{I}$ a lift of $\check{I}$ to $\D$. 
If we set $\tilde{\phi}_{i}:= \pi^{\ast}({\phi}_{i})$, then 
$\widetilde{I}^{\ast} (\tilde{\phi}_{i})= \overline{\tilde{\phi}_{i}}$ ($i=1, \cdots , 4$). 
Since $\check{I}$ is an antiholomorphic involution on $\Sigma$ without fixed points, 
$\widetilde{I}^{2k+1}$ $(k\in \Z)$ is also an antiholomorphic transformation on $\D$ without fixed points. 
From the argument of the proof of Lemma 1 in \cite{LM2000}, $\widetilde{I}^{2k}$ $(k\in \Z\backslash \{0\})$ 
has no fixed points on $\D$, $\langle \widetilde{I}^{2} \rangle \simeq \Z$, and 
$\D / \langle \widetilde{I}^{2} \rangle$ is biholomorphic to the annulus $A(R) =\{z\in\C \,;\, R^{-1}< |z|< R\}$ 
for a suitable $R >1$. Since $(\widetilde{I}^{2})^{\ast}(\tilde{\phi}_{i}) = \tilde{\phi}_{i}$, each 
holomorphic 1-form $\tilde{\phi}_{i}$ ($i=1, \cdots, 4$) can be induced on the quotient 
$\D / \langle \widetilde{I}^{2} \rangle$. The corresponding holomorphic 1-forms on 
$\D / \langle \widetilde{I}^{2} \rangle$ are denoted by ${\phi}_{1}$, ${\phi}_{2}$, ${\phi}_{3}$ and ${\phi}_{4}$, 
and obviously satisfy that $\sum_{i=1}^{4} {\phi}_{i}^{2}\equiv 0$, $ds^{2}:= \sum_{i=1}^{4} |{\phi}_{i}|^{2}$ is 
a complete conformal metric on $\D / \langle \widetilde{I}^{2} \rangle \simeq A(R)$ and 
$I^{\ast}({\phi}_{i})=\bar{{\phi}}_{i}$ ($i=1, \cdots , 4$), where $I\colon A(R)\to A(R)$ induced by $\tilde{I}$. Then 
it holds that $I(z)=-(\bar{z})^{-1}$ on $A(R)$. Moreover the two meromorphic functions 
$$
g_{1}=\dfrac{{\phi}_{3}+\sqrt{-1}{\phi}_{4}}{{\phi}_{1}-\sqrt{-1}{\phi}_{2}} \qquad \text{and} \qquad 
g_{2}=\dfrac{-{\phi}_{3}+\sqrt{-1}{\phi}_{4}}{{\phi}_{1}-\sqrt{-1}{\phi}_{2}}
$$
on $A(R)$ omit 4 points $\alpha$, $\beta$, $-(\bar{\alpha})^{-1}$ and $-(\bar{\beta})^{-1}$ in $\RC$. 

Let $f\colon \RC \to \RC$ be a rational function given in Lemma 2 in \cite{LM2000}, that is, the function $f$ satisfies the following 
three conditions: 
\begin{enumerate}
\item[(a)] The only poles of $f$ are $0$ and $\infty$, 
\item[(b)] $f\circ I_{0}= \bar{f}$, 
\item[(c)] $f$ has no zeros on the circle $\{z \, ;\, |z|=1\}$. 
\end{enumerate}
Set ${\phi}_{j}=({\varphi}_{j}/z) dz$ $(j=1, \cdots , 4)$ and write the Laurent series expansion of ${\varphi}_{j}$ as 
$$
\displaystyle {\varphi}_{j}(z) = a_{0}^{j}+ \sum_{n>0} (a_{n}^{j}z^{n}+ (-1)^{n+1}\bar{a}_{n}^{j}z^{-n}), \quad 
a_{0}^{j} \in \sqrt{-1}\R. 
$$ 
We easily check that the Laurent series expansion of $f$ is written as 
$$
\displaystyle f(z)=\sum_{n=1}^{m} (b_{n}z^{n}+(-1)^{n}\bar{b}_{n}z^{-n}), 
$$
where $m\in \Z_{+}$. Let $k$ be an odd positive number with $k>m$. Then it holds that
\begin{equation}\label{equ-appl-nonori-4}
\text{Res}_{z=0}\Biggl{(}\biggl{[}\sum_{n> 0} (a^{j}_{n}z^{kn}+ (-1)^{n+1}\bar{a}^{j}_{n} z^{-kn}) \biggr{]} f(z)
\dfrac{dz}{z} \Biggr{)}=0, \quad j=1, \cdots , 4. 
\end{equation}
Furthermore, by the virtue of the property for $f(z)$, we have 
\begin{equation}\label{equ-appl-nonori-5}
\text{Res}_{z=0}\Biggl{(}a^{j}_{0} f(z)\dfrac{dz}{z} \Biggr{)} =0, \quad j=1, \cdots , 4. 
\end{equation}

We consider the covering $T_{k}\colon A(R^{1/k})\to A(R)$, $T_{k}(z) =z^{k}$ and  
define the holomorphic $1$-forms $\psi_{j}$ $(j=1, \cdots , 4)$ on $A(R^{1/k})$ as follows: 
$$
{\psi}_{j}:= f(z)T^{\ast}_{k}({\phi}_{j}) = kf(z){\varphi}_{j}(z^{k}) \dfrac{dz}{z}.  
$$
From (\ref{equ-appl-nonori-4}) and (\ref{equ-appl-nonori-5}), we deduce that each 
$\displaystyle \int^{z}_{1} {\psi}_{j}$ is well-defined on $A(R^{1/k})$. 
Moreover $\sum_{j=1}^{4} {\psi}_{j}^{2}\equiv 0$ holds. Since $k$ is odd, we have
\begin{equation}\label{thm-appl-nonori-6}
I^{\ast}({\psi}_{j}) = \bar{\psi}_{j}, \quad j=1, \cdots , 4,  
\end{equation} 
where $I\colon A(R^{1/k})\to A(R^{1/k})$ is the lift of the previous involution in $A(R)$. 
Indeed, $I$ is represented as $I (z)=-(\bar{z})^{-1}$ here. We note that $\lim_{k\to \infty} R^{1/k}= 1$ 
and the zeros of $f$ are not on the circle $\{z \,;\, |z|=1\}$. Thus we take $k$ large enough, we can assume that 
$f$ never vanishes on the closure of $A(R^{1/k})$. Furthermore, since the only poles of $f$ are $0$ and 
$\infty$, there exists some real number $c >1$ such that 
$$
\dfrac{1}{c}< |f(z)|< c,  
$$
for any $z\in A(R^{1/k})$. Hence $\sum_{j=1}^{4}|{\psi}_{j}|^{2} \not= 0$, and if we define 
$ds^{2}_{0}= \sum_{j=1}^{4}|{\psi}_{j}|^{2}$, then we have  
$$
\dfrac{1}{c^{2}}T^{\ast}_{k} (ds^{2})\leq ds^{2}_{0} \leq c^2 T^{\ast}_{k} (ds^{2}).  
$$  
Since $ds^{2}$ is complete, the metric $T^{\ast}_{k} (ds^{2})$ and $ds^{2}_{0}$ are also complete. 

Therefore we obtain the conformal minimal immersion 
$$
X\colon A(R^{1/k})\to {\R}^{4},  \quad 
\displaystyle X(z)=\text{Re}\int^{z}_{1} ({\psi}_{1}, {\psi}_{2}, {\psi}_{3}, {\psi}_{4}) 
$$
and the induced metric $ds^{2}_{0}$ is complete and each component of the Gauss map $g_{i}\circ T_{k}$ 
($i=1, 2$) omits $4$ points in $\RC$. From (\ref{thm-appl-nonori-6}), the immersion $X$ induces a 
minimal immersion from the M\"obius strip $A(R^{1/k}) /\langle I \rangle$ to ${\R}^{4}$, and 
each component of the generalized Gauss map omits $2$ points in $\mathbf{RP}^{2}$. 
\end{proof}

\begin{remark}\label{rmk-appl-nonori-2}
From a similar argument of the proof, we can show that there exist nonflat complete nonorientable minimal surfaces 
in ${\R}^{4}$ one of which components of the generalized Gauss map is nonconstant and omits $1$ 
point in $\R\Pi^{2}$ and the other is constant. 
\end{remark}

Finally, we deal with value distribution of the generalized Gauss map of complete nonorientable minimal surfaces 
in ${\R}^{4}$ with finite total curvature. Applying \cite[Theorem 6.9]{HO1985} (see also \cite[Theorem 3.2]{Ka2009}) 
to the generalized Gauss map, we get the following: 

\begin{proposition}\label{thm-appl-nonori-3} 
Let $\widehat{X}\colon \widehat{\Sigma}\to {\R}^{4}$ be a nonflat complete nonorientable minimal surface with finite total curvature 
and $\widehat{G}=(\hat{g_{1}}, \hat{g_{2}})$ the generalized Gauss map of $\widehat{X}(\widehat{\Sigma})$. 
\begin{enumerate}
\item[(i)] Assume that $\hat{g}_{1}$ and $\hat{g}_{2}$ are both nonconstant. Then at least one of them can 
omit at most $1$ point in $\R\Pi^{2}$. 
\item[(i\hspace{-.1em}i)] If either $\hat{g}_{1}$ or $\hat{g}_{2}$, say $\hat{g}_{2}$, is constant, 
then $\hat{g}_{1}$ can omit at most $1$ point in $\R\Pi^{2}$. 
\end{enumerate}
\end{proposition}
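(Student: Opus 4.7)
The plan is to pass to the orientable conformal double cover $\pi\colon \Sigma\to\widehat{\Sigma}$ and apply the Hoffman--Osserman theorem for algebraic minimal surfaces in ${\R}^{4}$, namely \cite[Theorem 6.9]{HO1985} (equivalently \cite[Theorem 3.2]{Ka2009}), to the lifted Gauss map, then translate the conclusion back to $\widehat{\Sigma}$ via the antipodal equivariance (\ref{eq-appl-nonori-1}).

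First I would set $X:=\widehat{X}\circ \pi\colon \Sigma\to {\R}^{4}$, a complete orientable minimal immersion; completeness is immediate because $\pi$ is a local isometry. Since $\pi$ is two-sheeted, the absolute total curvature of $X$ is twice that of $\widehat{X}$ and hence finite, so $X(\Sigma)$ is an algebraic minimal surface in ${\R}^{4}$. Let $G=(g_{1},g_{2})$ denote its Gauss map. The relations (\ref{eq-appl-nonori-1}) read $g_{k}\circ I=-1/\overline{g_{k}}$, so each $\hat{g}_{k}$ satisfies $\hat{g}_{k}\circ \pi = \pi_{0}\circ g_{k}$, and $\hat{g}_{k}$ is nonconstant precisely when $g_{k}$ is. Applying \cite[Theorem 6.9]{HO1985} to $G$: if both $g_{1}$ and $g_{2}$ are nonconstant, at least one of them omits at most $2$ values in $\RC$; if one of them is constant, the other omits at most $2$ values in $\RC$.

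The decisive step is a fibre-counting argument for $\pi_{0}$. The projection $\pi_{0}\colon \RC\to \R\Pi^{2}$ is two-to-one with fibre $\{\alpha,-1/\overline{\alpha}\}$ over $[\alpha]\in\R\Pi^{2}$, and the antipodal map $I_{0}(z)=-1/\overline{z}$ is fixed-point free. The equivariance $g_{k}\circ I=-1/\overline{g_{k}}$ forces the exceptional set of $g_{k}$ in $\RC$ to be invariant under $I_{0}$ and therefore to decompose into antipodal pairs. Consequently, if $\hat{g}_{k}$ omits $q_{k}$ points in $\R\Pi^{2}$, then $g_{k}$ omits exactly $2q_{k}$ values in $\RC$. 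Combining this with the Hoffman--Osserman bound gives $2q_{j}\le 2$, i.e.\ $q_{j}\le 1$, in case (i) for some $j\in\{1,2\}$, and in case (ii) for $j=1$.

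I do not foresee a serious obstacle. The only items to verify are that completeness and finiteness of total curvature lift across $\pi$, both automatic since $\pi$ is a degree-two local isometry, and that the exceptional set of each $g_{k}$ is antipodal-invariant, which is precisely the content of (\ref{eq-appl-nonori-1}). The essential analytic input is already in \cite[Theorem 6.9]{HO1985}; the nonorientable version follows simply by halving the exceptional count through $\pi_{0}$.
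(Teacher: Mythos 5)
Your proof follows exactly the route the paper intends (the paper gives no details beyond the phrase ``applying \cite[Theorem 6.9]{HO1985} to the generalized Gauss map''): pass to the orientable conformal double cover, note that completeness and finiteness of total curvature lift, invoke the Hoffman--Osserman theorem for algebraic minimal surfaces in ${\R}^{4}$, and halve the count of exceptional values using the $I_{0}$-invariance of the exceptional set of each $g_{k}$, which is the correct and essential bridging step. One correction: the Hoffman--Osserman bound is that the relevant component omits at most $3$ values in $\RC$ (not $2$, as you state); this does not affect your conclusion, since the exceptional set of $g_{k}$ consists of $2q_{k}$ points grouped into antipodal pairs, and $2q_{k}\leq 3$ together with $q_{k}\in\Z$ still forces $q_{k}\leq 1$.
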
 

However we do not know whether Proposition \ref{thm-appl-nonori-3} is optimal or not. 

%%%%%%%%%%%% References %%%%%%%%%%%%%
%%
%<Author name> is written as Initial of Given Name, and Family Name.
%<Title> is written in roman letters.
%<Journal name> should be abbreviated according to
% the MR Serials Abbreviations List of Mathematical Reviews:
% (Abbreviations of Names of Serials; http://www.ams.org/mr-database)
%For <Pages>, use en-dash "--" between page numbers.
%%

\end{document}